\newcommand{\Z}{\mathbb{Z}}
\newcommand{\R}{\mathbb{R}}
\newcommand{\C}{\mathbb{C}}
\newcommand{\A}{\mathbb{A}}
\newcommand{\cI}{\mathcal{I}}
\newcommand{\cC}{\mathcal{C}}
\newcommand{\cB}{\mathcal{B}}
\newcommand{\p}{\mathfrak{p}}
\newcommand{\q}{\mathfrak{q}}
\newcommand\ph\varphi
\newcommand\ps\psi
\newcommand\ep\varepsilon
\newcommand\rh\varrho
\newcommand\al\alpha
\newcommand\be\beta
\newcommand\ga\gamma
\newcommand\om\omega
\newcommand\ta\tau
\renewcommand\th\vartheta
\newcommand\de\delta
\newcommand\ze\zeta
\newcommand\ch\chi
\newcommand\et\eta
\newcommand\io\iota
\newcommand\la\lambda
\newcommand\si\sigma
\newcommand\Ga\Gamma
\newcommand\De\Delta
\newcommand\Th\Theta
\newcommand\La\Lambda
\newcommand\Si\Sigma
\newcommand\Ph\Phi
\newcommand\Ps\Psi
\newcommand\Om\Omega
\DeclareMathOperator\Quot{Quot}
\DeclareMathOperator\Hom{Hom}
\DeclareMathOperator\Cl{Cl}
\DeclareMathOperator\Tr{Tr}
\DeclareMathOperator\Norm{N}
\DeclareMathOperator\Spec{Spec}
\DeclareMathOperator\Sper{Sper}
\DeclareMathOperator\Sym{Sym}
\DeclareMathOperator\Mat{Mat}
\newtheorem{thm}{Theorem}[section]
\newtheorem{mthm}{Theorem}
\newtheorem{prop}[thm]{Proposition}
\newtheorem{lem}[thm]{Lemma}
\newtheorem{cor}[thm]{Corollary}
\theoremstyle{definition}
\newtheorem{rem}[thm]{Remark}
\renewenvironment{proof}[1][\unskip]{\par\noindent {\em Proof #1: }}{{\qed\bigskip}}
\title[Characteristic Polynomials of Symmetric Matrices]{\normalsize Characteristic Polynomials of Symmetric Matrices over the Univariate Polynomial Ring}
\date{\small\today}
\author{\small Christoph Hanselka}\thanks{The result of the present paper is part of the authors thesis \citep{Hanselka15} written  under the supervision of Prof. Markus Schweighofer at the University of Konstanz.}
\address{The University of Auckland,Department of Mathematics, Private Bag 92019, Auckland 1142, New Zealand}
\email{c.hanselka@auckland.ac.nz}
\subjclass[2010]{Primary: 14P05; Secondary: 13J30, 90C22}
\keywords{characteristic polynomials, hyperbolic polynomials, determinantal representations, different ideal}
\begin{document}
\begin{abstract}
	Viewing a bivariate polynomial $f\in\R[x,t]$ as a family of univariate polynomials in $t$ parametrized by real numbers $x$, we call $f$ \emph{real rooted} if this family consists of monic polynomials with only real roots. If $f$ is the characteristic polynomial of a symmetric matrix with entries in $\R[x]$, it is obviously real rooted. In this article the converse is established, namely that every real rooted bivariate polynomial is the characteristic polynomial of a symmetric matrix over the univariate real polynomial ring.
	As a byproduct we present a purely algebraic proof of the Helton-Vinnikov Theorem which solved the 60 year old Lax conjecture on the existence of definite determinantal representation of ternary hyperbolic forms.
\end{abstract}
\maketitle

\section*{Introduction}

Given a monic polynomial $f\in A[t]$ over a commutative ring $A$ we call a square matrix $M\in \Mat_n A$ a \emph{spectral representation of $f$ over $A$} if $f$ is the characteristic polynomial of $M$, i.e., $f=\det(tI_n-M)$. The main result of this paper is the following
\begin{mthm}\label{thm:main}
	Let $f\in\R[x,t]$ be real rooted, i.e., monic in $t$ and for all $a\in\R$ the univariate polynomial $f(a,t)\in\R[t]$ has only real roots. Then $f$ admits a symmetric spectral representation over $\R[x]$, i.e., there exists $M\in\Sym_n\R[x]$ such that $f=\det(tI_n-M)$. 
\end{mthm}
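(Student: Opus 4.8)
The plan is to reduce to the case that $f$ is squarefree in $\R[x,t]$, then to reformulate the existence of a symmetric spectral representation as the existence of a suitable symmetric bilinear form on a fractional ideal of the ring $B:=\R[x][t]/(f)$, and finally to construct that form from the arithmetic and real geometry of the plane curve $\{f=0\}$. For the reduction I would induct on $n:=\deg_t f$: writing $f=\prod_i p_i^{e_i}$ with the $p_i\in\R[x,t]$ distinct irreducibles, normalized monic in $t$, if some $e_i\ge 2$ then $f=uv$ with $u=\prod_i p_i$ and $v=\prod_i p_i^{e_i-1}$, both monic in $t$ of $t$-degree $<n$, and both real rooted since every real root of $u(a,t)$ or of $v(a,t)$ is a real root of $f(a,t)$; by induction $u$ and $v$ admit symmetric spectral representations $M_1,M_2$ and $\begin{pmatrix}M_1&0\\0&M_2\end{pmatrix}$ is one for $f$. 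So assume $f$ squarefree. With $R:=\R[x]$ (a principal ideal domain) and $K:=\R(x)$, the algebra $B_K:=B\otimes_R K=K[t]/(f)$ is then finite étale over $K$, and $B$ is a reduced $R$-order in it, free of rank $n$ over $R$.

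The key point is the classical dictionary between symmetric spectral representations and forms on ideals. Any fractional $B$-ideal $I$ with $I\otimes_R K\cong B_K$ is $R$-free of rank $n$ (as $R$ is a PID), and multiplication by $t$ acts on it with characteristic polynomial $f$; if moreover $\varphi\colon I\times I\to R$ is a symmetric $R$-bilinear form for which multiplication by $t$ is self-adjoint and whose Gram matrix, in some $R$-basis of $I$, has the form $L^{\top}L$ with $L\in\Gl_n(R)$, then in the basis making $\varphi$ standard multiplication by $t$ is represented by a symmetric matrix $M$ with $\det(tI_n-M)=f$. I would look for $\varphi$ of the form $\varphi(a,b)=\Tr_{B_K/K}(\omega ab)$ for a unit $\omega\in B_K^{\times}$ still to be chosen; then self-adjointness of multiplication by $t$ is automatic by commutativity. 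Since $B=R[t]/(f)$ is monogenic, its different is the principal ideal $\mathfrak{d}_{B/R}=(\partial_t f(\bar t))$, generated by a non-zero-divisor because $f$ is squarefree, so the codifferent is the invertible fractional ideal $\frac{1}{\partial_t f(\bar t)}B=\{y\in B_K:\Tr_{B_K/K}(yB)\subseteq R\}$. One then checks that $\varphi$ is $R$-valued and unimodular precisely when $\omega I^{2}=\frac{1}{\partial_t f(\bar t)}B$; in particular $I^2$ must be principal, i.e. $[I]$ must be $2$-torsion in $\Pic(B)$.

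It remains to ensure positive definiteness of $\varphi$ after every real specialization $x\mapsto c$, and this is where real-rootedness enters. For all but finitely many $c\in\R$ the fibre of $\{f=0\}$ over $c$ consists of $n$ \emph{real} points $(c,r_1),\dots,(c,r_n)$, and diagonalizing $\varphi$ over that fibre shows that positive definiteness there amounts to $\omega$ being strictly positive at each $(c,r_i)$; since $\det$ of the Gram matrix of $\varphi$ is a nonzero constant, a signature argument then disposes of the finitely many branch values and of the point at infinity. Unravelling the identity $\omega I^2=\frac{1}{\partial_t f(\bar t)}B$, the whole theorem is thereby reduced to the following statement about the affine curve $\{f=0\}$: there exists a unit $\omega\in B_K^{\times}$ which is strictly positive at every real point of $\Spec B$ and for which $\frac{1}{\omega\,\partial_t f(\bar t)}B$ is the square of an invertible fractional $B$-ideal $I$ — equivalently, on the smooth locus, a rational function on $\{f=0\}$ that is positive at all real points and whose divisor is congruent modulo $2$ to the ramification divisor $\operatorname{div}(\partial_t f(\bar t))$ of the projection to the $x$-line.

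I expect the construction of such an $\omega$ (equivalently, of the ideal $I$) to be the main obstacle and the real content of the theorem — it is here, and not merely in the étale hypothesis, that real-rootedness is used in full strength. The essential inputs should be that real-rootedness forces every real fibre of $\pi\colon\{f=0\}\to\A^1$ to be totally real, so that the real locus sits over the $x$-line as $n$ (unavoidably nested) sheets, together with the arithmetic of $\R(x)$, in which every positive function is a sum of two squares; in the hyperelliptic case $n=2$, with $f=t^2-2bt+c$, the statement already boils down to the classical fact that the nonnegative polynomial $b^2-c$ is a sum of two squares. Once $\omega$, the ideal $I$, and hence the Gram matrix $G$ of $\varphi$ are in hand, I would conclude by invoking the fact that a symmetric matrix over $\R[x]$ with constant nonzero determinant that is positive definite at every real point is $\R[x]$-congruent to the identity, which supplies the basis in which multiplication by $t$ is the desired symmetric matrix $M$. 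Finally, homogenizing and, in the ternary case, checking that the representation can be taken linear in the variables, yields the Helton--Vinnikov theorem on definite determinantal representations of hyperbolic plane curves.
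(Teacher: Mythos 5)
Your proposal has the same overall architecture as the paper's proof (trace form $\Tr(\omega a b)$ on an ideal $I$, reduction to $\omega I^{2}$ being the codifferent, Harder--Djokovi\'c diagonalization to produce an orthonormal basis), but there are two substantive issues: the choice of ring, and the unproved central step.

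First, you work throughout with $B:=\R[x][t]/(f)$, which for singular (but squarefree) $f$ is not integrally closed and hence not a Dedekind domain. The paper instead passes to the \emph{normalization}, i.e.\ the integral closure of $\R[x]$ in $L=K[t]/(f)$; the price is that the concrete Euler formula $\De=\bigl(\partial_t f(\overline{t})^{-1}\bigr)$ is replaced by the abstract codifferent ideal, but the gain is that one stays inside Dedekind theory, where fractional ideals are invertible, valuations at primes behave well, and the class group of the complexification is divisible (via the Jacobian, Theorem~\ref{thm:divisible}). Working with the order directly, the equivalence ``$\varphi$ unimodular on $I$ $\Leftrightarrow$ $\omega I^{2}=\De$'' requires $I$ to be invertible, the needed even-valuation statement at real primes has no clean meaning at singular points, and it is unclear that $\Pic$ of the (possibly singular) order over $\C$ is $2$-divisible. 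This is exactly what Remark~\ref{rem:smoothness} flags: the proof naturally produces representations that factor through the normalization. You should also push the reduction from squarefree to irreducible (same block-diagonal argument) so that $B$ is a domain.

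Second, and more seriously, the heart of the theorem --- the existence of a sum of squares $\omega$ and a fractional ideal $I$ with $\omega I^{2}=\De$ --- is only gestured at. You correctly recognize that this is ``the real content,'' but your proposed inputs (``every positive function on $\R(x)$ is a sum of two squares,'' plus the $n=2$ hyperelliptic computation $b^{2}-c=$ sum of two squares) do not generalize: for $\deg_t f\geq 3$ the curve can have positive genus, and the existence of the required square root in the class group is a genuinely nontrivial divisibility statement about the Jacobian of the complexified curve. The paper's chain is: real rootedness $\Rightarrow$ absence of real ramification (Lemma~\ref{lem:real_rooted_transfer}, Lemma~\ref{lem:real_rooted_split}, Corollary~\ref{cor:really_unramified}); hence $v_{\q}(\De)=0$ at all real primes $\q$ (Lemma~\ref{lem:different_unramified}); hence the class of $\De$ is a square in the narrow class group (Corollary~\ref{cor:divisible}), where the $2$-divisibility of $\Cl$ of the complexified Dedekind domain (Theorem~\ref{thm:divisible}, due to Frey / Weil's Jacobian) is used together with the ideal norm from $B\otimes\C$ down to $B$. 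Without this ingredient the construction of $\omega$ and $I$ is missing, and nothing in your sketch supplies a substitute.
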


\subsection*{Symmetric Spectral Representations as Certificates of Real Rootedness}

Given a commutative ring $A$, it is generally a difficult problem to characterize those monic polynomials $f\in A[t]$ that admit a symmetric spectral representation over $A$. As noted above, in the case where $A$ is the polynomial ring $\R[x]$ there is an obvious necessary condition, namely that $f$ is real rooted. This means for every homomorphism $\R[x]\to \R$ the image of $f$ in $\R[t]$ (under coefficientwise application) has only real roots.  The following generalization of this property is shared by all characteristic polynomials of symmetric matrices over any commutative ring $A$: We call $f\in A[t]$ \emph{real rooted over $A$} if $f$ is monic and for all ring homomorphisms from $A$ to any real closed field $R$ the image of $f$ in $R[t]$ has only roots in $R$. In the case $A=\R[x]$ it suffices to check homomorphisms to $\R$ and hence this is indeed a generalization, see Remark~\ref{rem:real_rooted}.

Now it natural to ask about the converse: Which \emph{real rooted} polynomials admit a symmetric spectral representation, or some related, possibly weaker, representation that manifests the real rootedness?

The following characterization of real rooted polynomials over fields is due to Krakowski \citep{Krakowski58}: If $K$ is any field of characteristic different from $2$ then $f\in K[t]$ is real rooted over $K$ if and only if a power of $f$ admits a symmetric spectral representation over $K$. See also \citep{Koulmann01} for a generalization and some lower and upper bounds on the exponent needed.

A useful reformulation of the existence of symmetric spectral representations has been given by Bender \citep{Bender67_rings}, generalizing a result of Latimer and MacDuffee \citep{Latimer_MacDuffee33}, who established a correspondence between equivalence classes of spectral representations of a polynomial $f$ over the ring of integers $\Z$ and ideal classes in $\Z[t]/(f)$. Bender's observation in \citep{Bender67_rings} serves as an inspiration for the present work as it did for Bass, Estes and Guralnick who proved in  \citep{Bass_Estes_Guralnick94} that if $A$ is a Dedekind domain and $f\in A[t]$ real rooted, then $f$ divides the characteristic polynomial of a symmetric matrix over $A$. In other words this means that all roots of $f$ are eigenvalues of a symmetric matrix. Using this result the eigenvalues of adjacency matrices of regular graphs are characterized.

For a slightly smaller class of polynomials, their result can be further extended: A monic polynomial over $A$ is \emph{strictly real rooted} if for any homomorphism $A\to R$ to a real closed field $R$ all roots of the image of $f$ in $R[t]$ lie in $R$ {\it and are simple}. Kummer recently showed in \citep{Kummer16} that for any integral domain $A$ every strictly real rooted polynomial $f\in A[t]$ divides the characteristic polynomial of a symmetric matrix.

The first result towards classification of polynomials that admit symmetric spectral representations without an additional factor  is also due to Bender \citep{Bender68_fields}: If $K$ is a number field and $f\in K[t]$ real rooted over $K$ with an odd degree factor, then $f$ admits a symmetric spectral representation over $K$. It makes essential use of Hasse's local global principle for quadratic forms. A geometric counterpart of this number theoretic theorem holds without restriction: If $K$ is a univariate function field over $\R$ then any real rooted polynomial over $K$ admits a symmetric spectral representation. This follows from Krüskemper's work on scaled trace forms \citep{Krueskemper89}. Ultimately it is a consequence of another local global principle for quadratic forms, due to Witt. See \citep[Lemma~1.5]{Fitzgerald94} and also \citep{Hanselka15} for a more direct proof using an argument by Leep.

The main result of the present paper, Theorem \ref{thm:main}, can be read as a strengthening of two of the aforementioned ones: 
In contrast to the general case of Dedekind domains in \citep{Bass_Estes_Guralnick94}, no additional factors are required. Moreover, it is a denominator free version of the case of the real rational function field: If the coefficients of the polynomial in question are denominator free, then it admits a {\it denominator free} symmetric spectral representation. Using transformations of the form $f\mapsto a^{-d}f(at)$ it is easy to deduce from this the version with denominators. However, our main argument relies less on the theory of quadratic forms but rather on classical theory of divisors on algebraic curves.

The previous results reveal the exceptionality of the case $\R[x]$ over which the class of real rooted polynomials consists \emph{exactly} of the characteristic polynomials of symmetric matrices. In fact, this seems to be essentially the only known nontrivial example of a ring, that is not a field and for which these two classes of polynomials coincide.

\subsection*{Application to Hyperbolic Polynomials}
Closely related to spectral representations are linear determinantal representations of forms in several variables $F\in\R[x_1,\dots,x_{\ell}]$. These are linear pencils of the form
\[
	L=A_1x_1+\dots+A_{\ell}x_{\ell}\quad (A_i\in\Mat_n(\R),\ n=\deg F)
\]
with determinant $F$. We apply our main result to obtain linear symmetric determinantal representations of ternary hyperbolic forms.  A homogeneous polynomial $F\in\R[x_1,\dots,x_{\ell}]$ is called \emph{hyperbolic} with respect to some direction $e\in \R^{\ell}$ if $F(e)>0$ and all real lines in this direction intersect the projective hypersurface defined by $F$ only in real points, i.e., for all $a\in\R^{\ell}$ the univariate polynomial $F(te-a)\in\R[t]$ has only real roots.

We show that a consequence of Theorem~\ref{thm:main} is the following well-known result.

\begin{mthm}[Helton-Vinnikov]\label{thm:heltonvinnikov}
	Let $F\in\R[x,y,z]$ be hyperbolic with respect to $e\in\R^3$. Then there exists a real symmetric matrix pencil $L=Ax+By+Cz$ ($A,B,C\in\Sym_n\R$) such that $L(e)$ is positive definite and $F=\det L$.
\end{mthm}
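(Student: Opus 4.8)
The plan is to deduce the statement from Theorem~\ref{thm:main} and then to ``linearise'' the representation it provides. After a linear change of the variables $x,y,z$ — which carries symmetric definite pencils to symmetric definite pencils — we may assume $e=(0,0,1)$, and after scaling $F$ by a positive constant that $F(e)=1$. Put $n=\deg F$ and $f=F(x,1,t)\in\R[x,t]$. Then $f$ is monic of degree $n$ in $t$, its leading coefficient being $F(e)=1$, and for each $a\in\R$ the polynomial $f(a,t)=F\bigl(t\,e-(-a,-1,0)\bigr)$ has only real roots by hyperbolicity of $F$; thus $f$ is real rooted in the sense of Theorem~\ref{thm:main}, and that theorem supplies $M\in\Sym_n\R[x]$ with $f=\det(tI_n-M)$. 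The crucial point is that it suffices to find such an $M$ that is \emph{linear} in $x$, say $M=M_0+M_1x$ with $M_0,M_1\in\Sym_n\R$: the pencil $L=zI_n-M_1x-M_0y$ is then symmetric, $\det L$ is homogeneous of degree $n$, and $\det L$ restricts on $\{y=1\}$ to $\det(zI_n-M_1x-M_0)=f(x,z)=F(x,1,z)$, so that $\det L=F$, while $L(e)=I_n$ is positive definite.

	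That a linear $M$ should exist is suggested by the structure of $f$: since $F$ is homogeneous of degree $n$, $f$ has total degree $n$, equal to its degree in $t$, and its degree-$n$ part $F(x,0,t)$ — being $F$ restricted to the line $\{y=0\}$ through $e$ — splits into real linear factors $\prod_{k}(t-\la_k x)$, so that in a linear representation $M_1$ is forced, up to orthogonal conjugation, to be the diagonal matrix with diagonal $\la_1,\dots,\la_n$, and only $M_0$ remains to be found. To produce $M$ I would pass to the plane curve $C=\{F=0\}\subset\P^2$, first reducing to the case that $C$ is smooth. This reduction is legitimate by a limiting argument: the strictly hyperbolic forms, whose curves are smooth, are dense among the forms hyperbolic with respect to $e$, and along a convergent sequence of such forms the coefficient matrices of the representations normalised by $L(e)=I_n$ stay bounded, so a subsequence converges to a representation of the limiting form.

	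Assume then that $C$ is smooth; the affine curve $\{f=0\}$ is the complement in $C$ of the finitely many points lying over $x=\infty$ (where the line $\{y=0\}$ meets $C$), and it too is smooth. By the correspondence of Bender recalled in the introduction, symmetric spectral representations of $f$ over $\R[x]$ are described by line bundles $\mathfrak a$ on $\{f=0\}$ with $\mathfrak a^{\otimes2}\cong\mathfrak d^{-1}$, where $\mathfrak d$ is the different over $\R[x]$, together with a nondegenerate symmetric $\R[x]$-bilinear pairing equivalent to the standard unit form; Theorem~\ref{thm:main} provides one such $\mathfrak a$. Among all these, the \emph{linear} representations are exactly the ones whose $\mathfrak a$ extends over the points of $C$ above $x=\infty$ to a line bundle $\overline{\mathfrak a}$ realising Dixon's classification of the $n\times n$ symmetric determinantal representations of the smooth plane curve $C$ of degree $n$ — so that $\overline{\mathfrak a}^{\otimes2}$ is the prescribed twist $\omega_C(2)$ of the canonical bundle, $\overline{\mathfrak a}$ has the prescribed degree, and $\overline{\mathfrak a}(-1)$ has no sections — and which in addition lie in the real, ``definite'' class that forces $L(e)$ to be positive definite. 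The task thus becomes to alter the $\mathfrak a$ supplied by Theorem~\ref{thm:main} by a divisor supported over $x=\infty$ so as to meet all these conditions simultaneously: the correct degree, the square-class relation with the different now imposed \emph{globally} on $C$, and the positivity at the real points.

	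This simultaneous adjustment at infinity is where I expect the real difficulty to lie. Matching the global square-class amounts to comparing the different of the degree-$n$ covering $C\to\P^1_x$ — the projection of $C$ from $e$ — with the different of its affine part, i.e.\ to a Riemann--Hurwitz and parity computation localised over $x=\infty$; and securing the positivity is where hyperbolicity of $F$ with respect to $e$ is indispensable, because it forces the real locus of $C$ to consist of nested ovals with $e$ in the innermost region, which is exactly the geometric configuration that guarantees the existence of the required ``definite'' twisted theta-characteristic on the real curve $C$. Granting such an $\overline{\mathfrak a}$, it yields a linear symmetric $M$, hence the pencil $L$ above with $\det L=F$ and $L(e)$ positive definite, and the theorem follows.
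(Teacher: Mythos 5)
You set up the reduction correctly (normalize $e=(0,0,1)$, $F(e)=1$, dehomogenize to $f=F(x,1,t)$, note $f$ is real rooted, and invoke Theorem~\ref{thm:main}), but then you go astray at exactly the point where the paper's argument becomes trivial. You treat ``find a \emph{linear} $M$'' as a fresh, hard problem and launch into a Dixon/Vinnikov-style construction via line bundles and theta characteristics on the projective curve — which is precisely the transcendental machinery the paper is designed to avoid, and which you yourself leave incomplete (``this simultaneous adjustment at infinity is where I expect the real difficulty to lie'', and the smoothness reduction by a limiting argument is asserted without any justification that the normalized representations stay bounded).

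The observation you are missing is that the $M\in\Sym_n\R[x]$ already supplied by Theorem~\ref{thm:main} is \emph{automatically} linear, with no further work. This is Proposition~\ref{prop:value} and Corollary~\ref{cor:degree}: for a real valuation $v$ on $K$ and a nonzero symmetric $M\in\Sym_n K$ with characteristic polynomial $\sum_i a_i t^i$, one has $v(M)=\min_{i<n} v(a_i)/(n-i)$. The point is that the residue field being formally real forces the reduced matrix $\overline{a^{-1}M}$ (where $a$ is an entry of minimal value) to be symmetric, nonzero, and hence \emph{not nilpotent}, so some lower coefficient of its characteristic polynomial survives reduction. Applied to the degree valuation $v=-\deg$ on $\R(x)$: since $F$ is homogeneous of degree $n$, the dehomogenization $f$ has total degree $n$, so $\deg a_i\le n-i$, giving $v(M)\ge -1$, i.e.\ every entry of $M$ has degree at most one. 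Writing $M=M_0+M_1x$ and rehomogenizing gives the pencil $L=I_nz-M_0y-M_1x$ with $\det L=F$ and $L(e)=I_n\succ0$. No line bundles, no Dixon, no adjustment at infinity, no smoothness reduction. Without this degree bound your argument is not a deduction from Theorem~\ref{thm:main} but essentially a (sketchy) independent proof of Helton--Vinnikov.
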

The original proof can be found in \citep{Helton_Vinnikov07}. It relies on transcendental tools from algebraic geometry such as theta functions on the Jacobian of a Riemann surface. In sharp contrast, our treatment involves only purely algebraic ingredients. 
The statement of Theorem~\ref{thm:heltonvinnikov} has been conjectured by Lax in 1958, see \citep{Lax58}. Its solution also settled a question of Parrilo and Sturmfels in \citep{Parrilo_Sturmfels01} for the characterization of the plane convex semi-algebraic sets that are the feasible sets of semidefinite programming and their minimal descriptions. See \citep{Vinnikov12} for a survey on related problems. Moreover, a slightly weaker version (see Section \ref{sec:hermitian}) has been used in the celebrated proof of the Kadison-Singer Conjecture by Marcus, Spielman and Srivastava in \citep{Marcus_etal13} and by Speyer in \citep{Speyer05} to give another proof of the affirmative answer to Horn's problem on eigenvalues of sums of Hermitian matrices.

\subsection*{Sketch of the Proof}

The proof of Theorem~\ref{thm:main} is an adaptation of the following simple construction with constant coefficients. Suppose $f\in\R[t]$ is a monic polynomial with real simple roots $\la_1,\dots,\la_n\in\R$. We are going to find a real symmetric spectral representation $M\in\Sym_n\R$ of $f$ without computing the roots of $f$. To this end we define the finite-dimensional $\R$-algebra $B:=\R[t]/(f)$ and the vector space endomorphism $\mu$ of $B$ that is given by multiplication by $\overline{t}=t+(f)$. As is easily verified, $f$ is the characteristic polynomial of $\mu$. Moreover, $\mu$ is obviously self-adjoint with respect to the trace form
\begin{align*}
	\ta\colon B\times B&\to\R \\
	(\overline{g},\overline{h})&\mapsto\Tr_{B|\R}(\overline{g}\overline{h})=\sum_igh(\la_i)
\end{align*}
which is positive definite and hence admits an orthonormal basis $\cB$. Now the representing matrix $M$ of $\mu$ with respect to $\cB$ has characteristic polynomial $f$ and is symmetric.

Besides basic field operations this construction only involves taking square roots of positive real numbers in the orthonormalization step. The obvious obstacle to generalizing this construction to coefficient rings $A$ other than $\R$ is the non-existence of an orthonormal basis of the trace form. To overcome it, we follow Bender's approach in \citep{Bender67_rings} by scaling the trace form and replacing the $A$-algebra $B$ by a suitable $B$-ideal.

\medskip

Suppose now $f\in\R[x,t]$ is real rooted and irreducible. We give an outline of the main ideas involved in the construction of a symmetric spectral representation of $f$ as in Theorem~\ref{thm:main}. For simplicity we want to assume that $f$ defines a smooth plane curve. See Remark \ref{rem:smoothness} below on how this can be avoided.

\bigskip
(1)\quad
Consider the extension $B|A$, where $B:=\R[x,t]/(f)$ is the coordinate ring of the curve $\cC$ defined by $f$ and $A:=\R[x]$ the coordinate ring of the $x$-axis, the real affine line. As above, the trace form $\ta$ of $B|A$ is positive semidefinite in all real points. However, the main difficulty in finding an orthonormal basis for $\ta$ is that it is singular in some complex points, namely in those $a\in\C$ where $f(a,t)$ has multiple roots. In other words, these are the ramification points of the projection $\pi$ of $\cC$ onto the $x$-axis. 

\bigskip
(2)\quad
The ramification locus of $\pi$ consists of the points with vertical tangent, i.e., the zeros of
\[
	\de:=\overline{\frac{\partial f}{\partial t}}\in B
\] 
on the curve $\cC$. Rescaling the trace form by $\de^{-1}$ makes it regular everywhere, see Remark~\ref{rem:euler}. On the other hand we lose positivity. This rescaled trace form is totally indefinite in all real points by Rolle's Theorem since the derivative changes sign between two consecutive real roots.

\bigskip
(3)\quad
Next we replace the scaling factor $\de^{-1}$ by a function that assures both definiteness and regularity. For this it must have essentially (up to even order) the same zeros and poles but be positive in all real points.

This is made precise in Lemma~\ref{lem:unimodular_trace_form}: If the $B$-ideal $(c)$ differs from  $\left(\de^{-1}\right)$ by the square of a fractional ideal $I$ then the trace form, rescaled by $c$, restricts to a form on $I$ that is everywhere regular. Moreover, it is positive definite in all real points, if in addition $c$ is a sum of squares. In this case $I$ admits an orthonormal basis $\cB$ with respect to this scaled trace form. Here we use a property that is specific to univariate polynomial rings over fields: Every invertible symmetric matrix is congruent to a constant one, see Theorem~\ref{thm:diagonalization}.

\bigskip
(4)\quad
As in the constant case described above we now choose $M$ to be the representing matrix of $\mu$ with respect to this orthonormal basis $\cB$, where $\mu$ is multiplication by $\overline{t}$ as an $\R[x]$-endomorphism of the ideal $I$. Then $M$ is a symmetric polynomial matrix and its characteristic polynomial is $f$, as desired.

\bigskip
(5)\quad
The main work lies in finding a sum of squares $c$ and a fractional $B$-ideal $I$ as in (3). To this end we first use the real rootedness of $f$ and smoothness of $\cC$ to show that the projection $\pi$ is unramified in all real points, i.e., the curve $\cC$ has no real vertical tangents. This is essentially Corollary~\ref{cor:really_unramified}. Consequently, the ramification points of $\pi$ are nonreal and hence come in complex conjugate pairs. In other words, we can factor the ideal $\De:=(\de^{-1})$ over $\C$ into a product $\overline{J}J$ of a fractional ideal $J$ and its conjugate. Now we use the $2$-divisibility of the class group of $B\otimes\C=\C[x,t]/(f)$, Theorem~\ref{thm:divisible}, to conclude that $J=eE^2$ for some $e\in\C[x,t]/(f)$ and a fractional ideal $E$. Taking $c$ the norm of $e$ and $I$ the norm of $E$ gives us the desired factorization $\De=cI^2$ with $c$ a sum of squares. This is carried out in detail in Corollary~\ref{cor:divisible}.

\begin{rem}\label{rem:smoothness}
	In the proof of Theorem~\ref{thm:main} the smoothness assumption is avoided by working with the normalization of the curve instead, i.e., $\R[x,t]/(f)$ is replaced by its integral closure $B$. The role of the derivative $\partial f/\partial t$ is replaced by the more abstractly defined but technically simpler \emph{codifferent ideal}, the dual module of $B$ with respect to the trace form of $B$ over $\R[x]$. However, if the curve is smooth, then $\R[x,t]/(f)$ is integrally closed. In particular $B$ coincides with $\R[x,t]/(f)$ and is therefore a primitive ring extension of $\R[x]$ and the description of the codifferent becomes more concrete, see Remark~\ref{rem:euler}.
	
	It is not hard to see that in the smooth case every symmetric spectral representation of $f$ arises in the way pointed out above. We can even describe their equivalence classes in terms of pairs $(I,c)$ as in (3), where equivalence of representations is induced by the action of the orthogonal group. For a more precise statement see \citep[Theorem 3.22]{Hanselka15}. If the curve is not smooth, then $\R[x,t]/(f)$ is not integrally closed. The symmetric spectral representations that are produced in the proof of Theorem~\ref{thm:main} are those that extend to homomorphisms from of the integral closure $B$ of $\R[x,t]/(f)$ to $\Sym_n\R[x]$. However, it is not clear which representations of $f$ extend to $B$ in this case.
\end{rem}

\subsection*{Reader's Guide} Section \ref{sec:preliminaries} consists of a collection of definitions, notations, conventions and general facts.
In Section \ref{sec:scaled_trace_forms} we recall a few properties of trace forms and their connection to ramification. Among these is a variant of Bender's result, how scaling a trace form and changing its domain can lead to a unimodular, i.e., everywhere regular form. 
Section \ref{sec:real_rooted} contains some general observations on real rooted polynomials and their interplay with trace forms. From these we deduce that the ramification points described above in the proof's outline are nonreal.
In Section \ref{sec:narrow_class_group} we characterize ideals that admit a factorization as required for the codifferent ideal and which is described in (5) of the outline of the proof.
In Section \ref{sec:proof} we combine the previous sections to give a proof of Theorem \ref{thm:main}.
Section \ref{sec:real_valuations} is concerned with the growth behavior of eigenvalues of symmetric matrices. Applied to polynomial matrices this gives a degree bound for their entries in terms of the coefficients of their characteristic polynomial. We use it to derive the Helton-Vinnikov Theorem~\ref{thm:heltonvinnikov} from our main result.
Section \ref{sec:hermitian} outlines how the proof of Theorem \ref{thm:main} can be simplified to obtain slightly weaker representations, namely complex Hermitian instead of real symmetric ones.

\section{Preliminaries}
\label{sec:preliminaries}
In this section we list some definitions, notations and conventions as well as some of the basic facts that are used throughout the text.
Since our methods are purely algebraic we will not make use of any topological properties of the fields of real and complex numbers. Accordingly $\R$ denotes some arbitrary fixed real closed field and $\C$ the algebraic closure of $\R$. We will refer to the elements of $\R$ and $\C$ as real and complex numbers, respectively.

\subsection*{Notions from Commutative Algebra}
	Let $A$ be a commutative ring, which will always be assumed to have a unit.
	\begin{enumerate}
		\item $\Mat_n A$ and $\Sym_n A$ are the sets of $n\times n$ matrices and symmetric matrices, respectively.
		\item An extension $B|A$ is \emph{finite}, if $B$ is finitely generated as an $A$-module.
		\item $\Spec A$ and $\Sper A$ are the spectrum and real spectrum of $A$, respectively. For the definition of the real spectrum and a general reference on real algebraic geometry see \citep{Marshall08} (also \citep{Bochnak_Coste_Roy_98} or \citep{Prestel_Delzell01}).
		\item For $\p\in\Spec A$ we denote $k(\p):=\Quot (A/\p)$ the residue field of $\p$.
		\item A prime ideal $\p\in\Spec A$ is \emph{real} if $k(\p)$ is formally real, i.e., admits an ordering.
		\item A symmetric bilinear form $\be\colon M\times M\to A$ on an $A$ module $M$ is \emph{unimodular} if $M$ is isomorphic to its own dual via $\be$, i.e., the induced map
			\begin{align*}
				M&\to \Hom_A(M,A)\\
				a&\mapsto \be(a,\cdot)
			\end{align*}
			is an isomorphism.
			\item If $B$ is an $A$-algebra, free of finite rank as an $A$-module, then the \emph{trace form of $B|A$} is the symmetric bilinear form
				\begin{align*}
					\ta_{B|A}\colon B\times B&\to A\\
					(a,b)&\mapsto \Tr_{B|A}(ab)
				\end{align*}
				where for $x\in B$ the trace $\Tr_{B|A}(x)$ is the trace of the $A$-endomorphism of $B$ that is given by multiplication by $x$.
			\item Let $f\in A[t]$ be a monic polynomial and $B:=A[t]/(f)$. The \emph{Hermite matrix} of $f$ is the representing matrix $H$ of $\ta_{B|A}$ with respect to the standard basis $1,\overline{t},\dots,\overline{t}^{n-1}$ of $B$.
	\end{enumerate}
	The importance of the trace form for us lies in the following well-known classical result on real root counting.
	\begin{lem}[Sylvester]\label{lem:signature_trace_form}
		Let $K$ be an ordered field with real closure $R$ and $f\in K[t]$ monic. Then the signature of the trace form of $K[t]/(f)$ over $K$ is the number of distinct roots of $f$ that lie in $R$.
	\end{lem}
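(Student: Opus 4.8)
The plan is to reduce the computation first to the real closure and then to a product of local Artinian algebras, and to carry out an elementary trace computation in each factor. To begin, I would note that the signature of a quadratic form over an ordered field does not change under base extension to its real closure: the Gram matrix of $\ta_{B|K}$ with respect to a $K$-basis of $B:=K[t]/(f)$ is literally the Gram matrix of $\ta_{B_R|R}$ with respect to the same basis of $B_R:=B\otimes_K R=R[t]/(f)$, since the trace of a multiplication operator is unaffected by enlarging the base field, and a diagonal entry $a\in K^\times$ of a diagonalization is positive for the ordering of $K$ exactly when it is positive (equivalently, a square) in $R$. So it suffices to treat the case $K=R$ and prove that the signature of $\ta_{R[t]/(f)|R}$ is the number of distinct roots of $f$ in $R$.

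Next, over the real closed field $R$ the polynomial factors as $f=\prod_i (t-\al_i)^{m_i}\cdot\prod_j q_j^{n_j}$ with the $\al_i\in R$ pairwise distinct and the $q_j\in R[t]$ pairwise distinct monic irreducible quadratics. By the Chinese Remainder Theorem $R[t]/(f)$ is isomorphic to the product of the local $R$-algebras $R[t]/((t-\al_i)^{m_i})$ and $R[t]/(q_j^{n_j})$, and $\ta_{R[t]/(f)|R}$ is the orthogonal sum of the trace forms of these factors (the idempotents cut out pairwise orthogonal summands). Hence it is enough to compute the signature of the trace form of each factor, i.e. of each algebra of the form $F[t]/(p^m)$ with $p$ monic irreducible over $F=R$.

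For such a local algebra I would use two elementary observations. Its nilradical $N=(p)/(p^m)$ is an ideal consisting of nilpotent elements, so multiplication by any element of $N$ is a nilpotent, hence traceless, endomorphism; therefore $N$ lies in the radical of the trace form, which consequently descends to a form on the residue field $L:=F[t]/(p)$, contributing nothing to the signature. This descended form is $m\cdot\ta_{L|F}$: after base change to $\overline F$ the algebra becomes a product of copies of $\overline F[t]/((t-\ga)^m)$ on each of which multiplication by $\bar g$ is triangular with $g(\ga)$ repeated $m$ times on the diagonal, so $\Tr_{F[t]/(p^m)|F}(\bar g)=m\,\Tr_{L|F}(\bar g)$. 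Thus the signature of the trace form of $F[t]/(p^m)$ equals the signature of $\ta_{L|F}$. For $p=t-\al_i$ we have $L=R$ and $\ta_{L|R}=\langle 1\rangle$, of signature $1$; for $p=q_j$ we have $L\cong C:=R[\sqrt{-1}]$, and $\ta_{C|R}$ is represented in the basis $1,\sqrt{-1}$ by $\left(\begin{smallmatrix}2&0\\0&-2\end{smallmatrix}\right)$, of signature $0$. Summing over the factors shows that the signature of $\ta_{R[t]/(f)|R}$ equals the number of the $\al_i$, that is, the number of distinct roots of $f$ in $R$.

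The argument is classical and presents no real obstacle; the only point requiring a little care is the passage in the third paragraph from the non-reduced Artinian algebra to its residue field — verifying that the nilradical lands in the radical of the trace form and that the quotient form is $m\cdot\ta_{L|F}$. Everything else (the Chinese Remainder Theorem, orthogonal additivity of trace forms over a product, invariance of signature under passage to the real closure, and the one-line Gram matrix of $C$ over $R$) is purely formal. Alternatively one could avoid extending scalars by factoring $f=\prod_i p_i^{e_i}$ over $K$ itself and computing $\ta_{L_i|K}$ for the field extensions $L_i=K[t]/(p_i)$ via $L_i\otimes_K R\cong R^{r_i}\times C^{s_i}$, which is the same computation organized slightly differently.
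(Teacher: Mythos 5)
The paper states this lemma without proof, citing it as a classical result of Sylvester, so there is no internal argument to compare against. Your proof is correct and is the standard one: passing to the real closure preserves the Gram matrix (hence the signature), the Chinese Remainder Theorem splits $R[t]/(f)$ into an orthogonal sum of local algebras $R[t]/(p^m)$, the nilradical of each lies in the radical of the trace form so the induced form on the residue field $L$ is $m\cdot\ta_{L|R}$ and carries the whole signature, and this signature is $1$ when $L=R$ and $0$ when $L\cong R[\sqrt{-1}]$ (Gram matrix $\operatorname{diag}(2,-2)$). All the steps — invariance under base change to $R$, orthogonality of the CRT idempotents, nilpotence implying tracelessness, and the identification of the quotient form as $m\cdot\ta_{L|R}$ via the triangular form of multiplication over $\overline{R}=C$ — are justified correctly, and the argument is complete.
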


	Now let $A$ be a Dedekind domain. For basic theory of Dedekind domains we refer to \citep[Chapter I]{Serre79_book}.
	\begin{enumerate}
		\item By $\cI_A$ we denote the group of nonzero fractional $A$-ideals. It is freely generated by the nonzero elements of $\Spec A$.
		\item The \emph{class group} of $A$, denoted by $\Cl A$, is the quotient of $\cI_A$ modulo the subgroup of principal ideals.
		\item For conceptual reasons we also define the finer \emph{narrow class group} $\Cl_+ A$ to be the quotient of $\cI_A$ modulo the subgroup of those principal ideals that are generated by a sum of squares.
		\item If $\p$ is a nonzero prime ideal of $A$ we denote the $\p$-adic valuation of the field of fractions of $A$ by $v_{\p}$ and for a fractional $A$-ideal $I$ we write
			\[
				v_{\p}(I):=\min \{\, v_{\p}(a) \mid a\in I\,\}
			\]
			for the multiplicity of $\p$ in the prime ideal factorization of $I$.
	\end{enumerate}

\subsection*{Unimodular forms over polynomial rings}
We will make essential use of the following special feature of univariate polynomial rings over fields, generalizing the well known fact that they have only constant units.
\begin{thm}[Harder/Djoković]\label{thm:diagonalization}
	Let $k$ be a field of characteristic different from $2$ and $M$ a free $k[x]$-module of rank $n$.  Then any unimodular bilinear form $\be$ on $M$ admits an orthogonal basis $q_1,\dots,q_n$. Moreover, for every such orthogonal basis we have
	\[
		\be(q_i,q_i)\in k^{\times}.
	\]
\end{thm}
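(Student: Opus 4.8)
The plan is to proceed by induction on the rank $n$, the base case $n=0$ being vacuous. The crux is the existence step: producing a single vector $q_1\in M$ with $\be(q_1,q_1)\in k^\times$, i.e.\ a unimodular rank-one subform splitting off as an orthogonal direct summand. Once such a $q_1$ is found, its orthogonal complement $q_1^\perp=\{m\in M:\be(q_1,m)=0\}$ is again a free $k[x]$-module (submodule of a free module over a PID) carrying the restricted form, and since $\be(q_1,q_1)$ is a unit we get $M=k[x]q_1\oplus q_1^\perp$ with the restriction of $\be$ to $q_1^\perp$ again unimodular; applying the induction hypothesis to $q_1^\perp$ yields the orthogonal basis. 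The final ``moreover'' clause — that \emph{every} orthogonal basis has $\be(q_i,q_i)\in k^\times$ — is then a separate, easy bookkeeping remark: in an orthogonal basis the Gram matrix is diagonal with entries $d_i=\be(q_i,q_i)$, its determinant is $\prod d_i$, and unimodularity forces $\prod d_i\in k[x]^\times=k^\times$; since each $d_i\in k[x]$ and a product of polynomials is a nonzero constant only if each factor is, we conclude $d_i\in k^\times$.

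For the existence of $q_1$, I would argue as follows. Pick any $v\in M$ with $\be(v,v)\neq 0$ of minimal degree $\deg_x\be(v,v)$ among all such vectors (if no vector has $\be(v,v)\neq 0$, the form is alternating, hence — in characteristic $\neq 2$ — it has even rank and a symplectic basis, but then its Gram matrix has determinant a square of... actually one must rule this out: a unimodular alternating form is a sum of hyperbolic planes, whose discriminant differs from a square-class issue; the honest route is to note that alternating bilinear forms in char $\neq 2$ have $\be(v,v)=0$ identically, yet then $\be$ cannot be unimodular over $k[x]$ unless... hmm, actually it can, e.g.\ the standard symplectic form, so alternating unimodular forms genuinely lack orthogonal bases — therefore the theorem must be read with ``bilinear form'' meaning ``symmetric bilinear form'', and I will assume symmetry, so that $\be(v,v)\neq 0$ for some $v$ whenever $\be\neq 0$). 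Now suppose $\deg_x d>0$ where $d=\be(v,v)$; I claim we can replace $v$ by a vector of strictly smaller self-value-degree, contradicting minimality. By unimodularity there is $w\in M$ with $\be(v,w)=1$. Consider the ``reflection-type'' adjustment: look for $p\in k[x]$ so that $\be(v+p w,\,v+p w)=d+2p+p^2\be(w,w)$ has smaller degree — division-with-remainder of the relevant polynomials against $d$, exploiting that $k[x]$ is Euclidean, should lower the degree, the point being that $2$ is a unit so the linear term $2p$ can be used to cancel the leading behaviour of $d$ modulo lower-degree corrections. Iterating drives $\deg d$ down to $0$, producing the required unit-valued vector.

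The main obstacle I anticipate is precisely making the degree-reduction step airtight: one needs to control simultaneously $\be(v,v)$, $\be(v,w)$, and $\be(w,w)$ and argue that some unimodular change of the pair $(v,w)$ strictly decreases $\deg\be(v,v)$ while keeping it nonzero — this is a small but genuine Euclidean-algorithm argument on a $2\times 2$ symmetric matrix over $k[x]$ whose determinant is a unit, essentially the statement that $\mathrm{SL}_2$ (or the orthogonal group) acts with the right orbits; the characteristic $\neq 2$ hypothesis enters exactly here, to invert $2$ when completing the square. A secondary point requiring care is the alternating/symmetric dichotomy flagged above: the statement as literally written (``bilinear form'') is false for alternating forms, so I would either silently read ``bilinear'' as ``symmetric bilinear'' (the evident intent, matching the trace-form application) or insert the observation that a non-symmetric unimodular form on a free $k[x]$-module still decomposes as (symmetric unimodular) $\perp$ (alternating unimodular) and treat only the symmetric summand, which is all the paper uses.
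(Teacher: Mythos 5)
The paper gives no proof of this theorem; it is stated with a citation to Djoković and to Scharlau's book, so there is nothing in the paper to compare your argument against. Evaluating your proposal on its own terms: the scaffolding is sound. The induction on rank, the orthogonal splitting $M = k[x]q_1 \perp q_1^\perp$ once a vector $q_1$ with $\be(q_1,q_1)\in k^\times$ is found (using that a submodule of a free module over the PID $k[x]$ is free, and that unimodularity passes to the orthogonal complement of a unimodular summand), and the ``moreover'' bookkeeping (the Gram matrix of an orthogonal basis is diagonal, its determinant $\prod_i \be(q_i,q_i)$ must lie in $k[x]^\times = k^\times$, and by additivity of degree over the integral domain $k[x]$ each factor is then a nonzero constant) are all correct. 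Your observation that ``bilinear'' must be read as ``symmetric bilinear'' is also right: the standard alternating form on $k[x]^2$ is unimodular and has no orthogonal basis, and symmetry is what the paper's application (to a trace form) uses.

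The genuine gap is in the existence of $q_1$, which is the entire content of the theorem, and your degree-reduction sketch does not establish it. Writing $d=\be(v,v)$, $e=\be(w,w)$ with $\be(v,w)=1$, the adjustment gives $\be(v+pw,v+pw)=d+2p+p^2e$. To cancel the top-degree part of $d$ using the linear term $2p$ requires $\deg p$ roughly equal to $\deg d$, but then $\deg(p^2e)\approx 2\deg d+\deg e$, which in general vastly exceeds $\deg d$; so the proposed Euclidean iteration has no reason to terminate. Nor is $e$ under your control: unimodularity gives you some $w$ with $\be(v,w)=1$, but $\be(w,w)$ is then whatever it is, and the $2\times 2$ subform on $\langle v,w\rangle$ need not itself be unimodular, so you cannot simply recurse into it. One is forced to manage all three quantities $\be(v,v),\be(v,w),\be(w,w)$ simultaneously under basis changes, and whether or when this lowers $\deg\be(v,v)$ depends on leading-coefficient conditions over $k$ that you have not addressed; in fact the reduction in Djoković's paper is a several-page case analysis, not a two-line Euclidean step. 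You correctly flag this step as ``the main obstacle,'' but as written the paragraph is a restatement of the difficulty rather than a resolution of it, so the proposal does not constitute a proof of the theorem.
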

\begin{proof}
	See for example \citep{Djokovic76} or \citep[Theorem~6.3.3]{Scharlau11_book}.
\end{proof}

\section{Scaled Trace Forms and the Codifferent}
\label{sec:scaled_trace_forms}
The trace form of a finite ring extension is in general not unimodular. This is the main obstacle to finding an orthonormal basis in the proof of our Theorem~\ref{thm:main}. Lemma~\ref{lem:unimodular_trace_form} shows how one can overcome this by scaling the trace form appropriately. The relation to ramification can be found in Lemma \ref{lem:different_unramified}. 

\subsection*{The complementary module}
Let $B|A$ be a finite extension of integral domains and assume the extension of their respective fields of fractions $L|K$ is separable. For an $A$-submodule $M$ of $L$ we denote
\[M':=\{\, x\in L \mid \Tr_{L|K}(xM)\subseteq A\,\}\]
the \emph{complementary module} of $M$ and by $\De(B|A):=B'$ the \emph{codifferent} of $B|A$, which is a fractional $B$-ideal.

The following is a variant of Bender's observation in \citep{Bender67_rings}.
\begin{lem}\label{lem:unimodular_trace_form}
	Let $B|A$ be a finite extension of integral domains with separable extension $L|K$ of their respective fields of fractions.
	Further, let $c\in L^{\times}$ and $I$ be an $A$-submodule of $L$ that generates $L$ as a $K$-vector space. We define the scaled trace form
			\begin{align*}
				\be\colon L\times L&\to K\\
				(a,b)&\mapsto\Tr_{L|K}(abc).
			\end{align*}
	\begin{enumerate}[(a)]
		\item The restriction of $\be$ to $I$ is unimodular if and only if $cI=I'$.
		\item If $BI\subseteq I$ then $I'$ coincides with the ideal quotient
			\[
				(\De(B|A):I)=\{\, x\in L \mid xI\subseteq\De(B|A)\,\}.
			\]
		\item If $B$ is a Dedekind domain and $I$ is a fractional $B$-ideal then $\be$ restricts to a unimodular form on $I$ if and only if $cI^2=\De(B|A)$.
	\end{enumerate}
\end{lem}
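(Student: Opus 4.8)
The plan is to prove the three parts in order, using part (a) as the engine for part (c) and part (b) as the technical bridge that identifies $I'$ with the ideal quotient $(\De(B|A):I)$ when $I$ is a $B$-module.

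For part (a), I would unwind the definition of unimodularity directly. The form $\be$ restricted to $I$ induces the $A$-linear map $\ph\colon I\to\Hom_A(I,A)$, $a\mapsto\Tr_{L|K}(a\,\cdot\,c)$. The key observation is that, since $L|K$ is separable, the trace pairing $\Tr_{L|K}$ is nondegenerate, so $\Hom_A(I,A)$ can be identified with the complementary module $I'=\{x\in L\mid\Tr_{L|K}(xI)\subseteq A\}$ via $x\mapsto(a\mapsto\Tr_{L|K}(xa))$; here one uses that $I$ generates $L$ over $K$ so that an $A$-homomorphism $I\to A$ extends uniquely to a $K$-linear functional on $L$, hence is given by multiplication by a unique element of $L$. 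Under this identification $\ph$ becomes the multiplication-by-$c$ map $I\to I'$, $a\mapsto ca$. This is an isomorphism of $A$-modules if and only if $cI=I'$ (injectivity is automatic since $c\in L^\times$ and $I\subseteq L$). That gives (a).

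For part (b), assuming $BI\subseteq I$, I would show the two $B$-submodules $I'$ and $(\De(B|A):I)$ of $L$ coincide by mutual inclusion, again via the nondegeneracy of the trace. An element $x\in L$ lies in $(\De(B|A):I)$ iff $xI\subseteq\De(B|A)=B'$, i.e. iff $\Tr_{L|K}(bxI)\subseteq A$ for all $b\in B$; in particular, taking $b=1$, this forces $x\in I'$. Conversely, if $x\in I'$ then $\Tr_{L|K}(xI)\subseteq A$, and for any $b\in B$ we have $bxI\subseteq xI$ because $BI\subseteq I$, whence $\Tr_{L|K}(bxI)\subseteq\Tr_{L|K}(xI)\subseteq A$, so $xI\subseteq B'=\De(B|A)$, i.e. $x\in(\De(B|A):I)$. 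Thus $I'=(\De(B|A):I)$.

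For part (c), I would combine (a) and (b) with Dedekind ideal arithmetic. Since $B$ is a Dedekind domain and $I$ a nonzero fractional $B$-ideal, we have $BI=I$, so (b) gives $I'=(\De(B|A):I)=\De(B|A)I^{-1}$, the last equality being the standard formula for the ideal quotient of invertible fractional ideals over a Dedekind domain. By (a), $\be$ restricts to a unimodular form on $I$ iff $cI=I'=\De(B|A)I^{-1}$, and multiplying both sides by the invertible ideal $I$ — i.e. using unique factorization of fractional ideals, which makes the monoid of nonzero fractional ideals cancellative — this is equivalent to $cI^2=\De(B|A)$. The main thing to be careful about is the role of the hypotheses: separability of $L|K$ is exactly what makes $\Tr_{L|K}$ nondegenerate (so that complementary modules behave like honest duals), and the condition that $I$ generates $L$ as a $K$-vector space is what lets us identify $\Hom_A(I,A)$ with a submodule of $L$ rather than something larger. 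No step is really an obstacle; the only mild subtlety is making the identification $\Hom_A(I,A)\cong I'$ precise without circularity, which is where I would spend the most care.
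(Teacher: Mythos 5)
Your proof is correct and follows essentially the same strategy as the paper's: identify $\Hom_A(I,A)$ with the complementary module $I'$ via the nondegenerate trace pairing, observe that the scaled-form duality map becomes multiplication by $c$ (so unimodularity is $cI=I'$), verify (b) by a direct unwinding of definitions using $BI\subseteq I$, and reduce (c) to (a)+(b) via invertibility of fractional ideals in a Dedekind domain. The only difference is that you spell out the mutual-inclusion argument for (b) and the well-definedness of the extension $\Hom_A(I,A)\to\Hom_K(L,K)$, which the paper dismisses as immediate; the substance is identical.
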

\begin{proof}
	(a) Since $\ta_{L|K}$ is regular and $I$ generates $L$ as a $K$-vector space, the map
	\begin{align*}
		I'\to \Hom_A(I,A)\\
		x\mapsto \ta_{L|K}(x,\cdot)
	\end{align*}
	is an isomorphism. This means via $\ta_{L|K}$ we can identify the complementary module $I'$ with the dual module of $I$. So via the scaled trace form $\be$ the dual of $I$ becomes the scaled complementary module $c^{-1}I'$. Further, $\be$ is unimodular on $I$ if and only if $I$ coincides with its own dual, i.e., $I=c^{-1}I'$.

	Part (b) follows immediately from the definition and (c) is just a combination of (a) and (b) using the fact that if $B$ is a Dedekind domain then $I$ is invertible and the ideal quotient $(\De(B|A):I)$ can thus be written as $\De(B|A)I^{-1}$.
\end{proof}
\begin{rem}\label{rem:euler}
	The codifferent, the role of the scaling factor in (a) of the previous lemma as well as the relation to vertical tangents become more concrete in the case of primitive ring extensions. For this we use a lemma often attributed to Euler \citep[Lemma~III.6.2]{Serre79_book}: Let $A$ be an integral domain and let $f\in A[t]$ be monic with only simple roots, $f':=\partial f/\partial t$
	its formal derivative and denote $B:=A[t]/(f)$. Then the scaled trace form
	\begin{align*}
		\be\colon B\times B&\to A\\
		(a,b)&\mapsto \Tr_{B|A}\left(\frac{ab}{f'(\overline{t})}\right)
	\end{align*}
	is well defined and unimodular. In particular $\De(B|A)=\left(\frac1{f'(\overline{t})}\right)$. 
\end{rem}

\subsection*{The codifferent encodes ramification}
Roughly speaking, the next lemma states that the support of the codifferent only contains ramified primes. A more precise statement about the ramification index is know as Dedekind's Different Theorem, see \citep[Theorem~III.2.6]{Neukirch99}. A proof of the following can also be found in \citep[Theorem~III.5.1]{Serre79_book}, but since it is short we include it for self-containedness. 
\begin{lem}\label{lem:different_unramified}
	Let $B|A$ be a finite extension of Dedekind domains and let $\p$ be a nonzero prime ideal of $A$ such that $\q|\p$ is unramified and $k(\q)|k(\p)$ is separable for all primes $\q$ of $B$ lying above $\p$. Then none of the latter appears in the prime ideal factorization of $\De(B|A)$, i.e., $v_{\q}(\De(B|A))=0$ for all $\q\in\cI_B$ lying above $\p$.
\end{lem}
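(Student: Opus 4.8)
The statement is local at $\p$, so the plan is to localize and work over the discrete valuation ring $A_\p$ and the corresponding semilocal Dedekind domain $B\otimes_A A_\p$; since $\De(B|A)$ localizes to $\De(B\otimes A_\p \mid A_\p)$ and $v_\q$ only sees the local data, it suffices to prove $\De(B\otimes A_\p\mid A_\p) = B\otimes A_\p$, i.e. that the scaled (indeed unscaled) trace form of $B_\p := B\otimes_A A_\p$ over $A_\p$ is unimodular. By Lemma~\ref{lem:unimodular_trace_form}(a) applied with $c=1$ and $I = B_\p$, this is equivalent to $\De(B_\p\mid A_\p) = B_\p$, so really the task is to show the trace form $\ta_{B_\p|A_\p}$ is unimodular, i.e. that its determinant (discriminant) is a unit in $A_\p$.

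\textbf{Key steps.} First I would reduce modulo $\p$: since $B_\p$ is a free $A_\p$-module, the trace form $\ta_{B_\p|A_\p}$ is unimodular if and only if its reduction, which is the trace form of the $k(\p)$-algebra $\overline{B} := B_\p/\p B_\p = B\otimes_A k(\p)$ over $k(\p)$, is nondegenerate. So everything comes down to showing: the trace form of the finite $k(\p)$-algebra $\overline B$ is nondegenerate. Second, because $\p$ is unramified in $B$, the ideal $\p B$ is a product of \emph{distinct} primes $\q_1,\dots,\q_r$ lying above $\p$, each with $e(\q_i|\p)=1$; by the Chinese Remainder Theorem $\overline B \cong \prod_{i=1}^r B/\q_i = \prod_{i=1}^r k(\q_i)$, a finite product of \emph{fields}, each a finite separable extension of $k(\p)$ by hypothesis. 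Third, the trace form of a finite product of algebras is the orthogonal sum of the trace forms of the factors (using $\Tr_{\prod C_i|k}$ decomposes accordingly), and the trace form of a finite \emph{separable} field extension $k(\q_i)|k(\p)$ is nondegenerate — this is the classical characterization of separability via the trace pairing. Hence $\ta_{\overline B|k(\p)}$ is an orthogonal sum of nondegenerate forms, so nondegenerate, which is what we needed. Running this back up gives $v_\q(\De(B|A))=0$ for every $\q\mid\p$.

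\textbf{Main obstacle.} There is no deep difficulty here; the only points requiring care are bookkeeping ones. One must make sure localization commutes with forming the codifferent and with the trace pairing — this is standard because $B$ is module-finite over $A$ and $A_\p$ is flat over $A$, so $\Tr_{B_\p|A_\p}$ is the base change of $\Tr_{B|A}$ and the dual module base-changes accordingly; thus $v_\q$ of the codifferent is unchanged. The other subtle point is the identification $\overline B \cong \prod k(\q_i)$: it uses precisely that \emph{all} ramification indices at $\p$ are $1$ (so $\p B$ is radical) together with CRT, and that the factors $B/\q_i$ are fields because $B$ is Dedekind. Everything else — unimodularity of a free bilinear form is detected after reduction mod the maximal ideal, additivity of trace forms over products, nondegeneracy of the separable trace form — is routine. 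I expect the write-up to be a few lines invoking exactly these facts.
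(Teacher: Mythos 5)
Your proposal is correct and follows essentially the same route as the paper: localize at $\p$, invoke Lemma~\ref{lem:unimodular_trace_form} to reduce to unimodularity of the trace form, reduce modulo $\p$, split $B\otimes k(\p)$ into a product of the residue fields $k(\q_i)$ via the unramifiedness hypothesis and CRT, and conclude from nondegeneracy of separable trace forms. The only cosmetic difference is that you spell out the flatness/base-change bookkeeping for localization, which the paper leaves implicit.
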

\medskip

\begin{proof}
	By considering the localization at $\p$ it suffices to assume that $A$ is a discrete valuation ring with maximal ideal $\p$ and prove that $\De(B|A)=B$.
	
	By Lemma~\ref{lem:unimodular_trace_form} this is equivalent to the trace form $\ta_{B|A}$ being unimodular. Since $A$ is a discrete valuation ring it suffices to show that $\ta_{B|A}$ becomes regular modulo the maximal ideal, i.e., that the trace form
	\[
		\ta_{B|A}\otimes k(\p)=\ta_{B\otimes k(\p)|k(\p)}
	\]
	of the residue ring extension is regular. Let $\p B=\prod_i\q_i$ be the prime ideal decomposition of $\p B$. By assumption the $\q_i$ are pairwise distinct and therefore coprime. This means $B\otimes k(\p)=B/\p B=\prod_i k(\q_i)$. Now we see that the 
	trace form of $B\otimes k(\p)$ over $k(\p)$ is regular, since it is the orthogonal sum of the trace forms of the separable extensions $k(\q_i)|k(\p)$.
\end{proof}

\section{Real Rooted Polynomials and the Trace Form}
\label{sec:real_rooted}
In this section we collect some basic properties of real rooted polynomials. In particular their interplay with trace forms is used to show absence of real ramification, see Corollary \ref{cor:really_unramified}.

\bigskip
Let $A$ be a commutative ring and $f\in A[t]$ monic. Recall that $f$ is \emph{real rooted over $A$}, if for every ring homomorphism $A\to R$ to a real closed field $R$ the image of $f$ in $R[t]$ has only roots in $R$.
For systematic reasons we want to replace homomorphisms into real closed fields by points in the real spectrum $\Sper A$ of $A$. For $P\in\Sper A$ with support $\p\in\Spec A$ denote by $R(P)$ the real closure of the (ordered) residue field $k(\p)$ of $P$ and by $f_P:=f\otimes 1\in A[t]\otimes R(P)=R(P)[t]$ the coefficient wise evaluation of $f$ at $P$. 

\begin{enumerate}
	\item We say $f$ is \emph{real rooted in $P$} if all roots of $f_P$ lie in $R(P)$ and accordingly $f$ is \emph{real rooted in $U\subseteq \Sper A$} if $f$ is real rooted in every point in $U$.
	\item In this sense $f$ is real rooted over $A$ if it is real rooted in $\Sper A$.
\end{enumerate}

From Sylvester's Lemma~\ref{lem:signature_trace_form} we immediately get the following
\begin{cor}\label{cor:signature_trace_form}
	Let $A$ be a commutative ring, $f\in A[t]$ monic, $B=A[t]/(f)$ and $\ta:=\ta_{B|A}$ the trace form of $B|A$. Then $f$ is real rooted in $P\in\Sper A$ if and only if $\ta\otimes_AR(P)$ is positive semidefinite.
\end{cor}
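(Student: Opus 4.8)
The plan is to deduce this directly from Sylvester's Lemma~\ref{lem:signature_trace_form}, using the elementary fact that a symmetric bilinear form over a real closed (hence characteristic $0$) field is positive semidefinite if and only if its signature equals its rank. Fix $P\in\Sper A$ with support $\p$ and abbreviate $R:=R(P)$, the real closure of the ordered residue field $k(\p)$. The first step is to identify $\ta\otimes_A R$: since $B=A[t]/(f)$ is a free $A$-algebra, the trace form commutes with the base change $A\to R$ (the matrix of multiplication by $x\otimes 1$ on $B\otimes_A R$ is the image of the matrix of multiplication by $x$ on $B$), so $\ta\otimes_A R$ is the trace form of $B\otimes_A R=R[t]/(f_P)$ over $R$.

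Next I would read off the signature and the rank of this form. By Sylvester's Lemma~\ref{lem:signature_trace_form}, applied to $f_P\in R[t]$ over the ordered field $R$, which is its own real closure, the signature of $\ta\otimes_A R$ equals the number of distinct roots of $f_P$ that lie in $R$. For the rank, note that it is insensitive to field extension, so it may be computed after passing to the algebraic closure $\C(P)$ of $R$: there $\C(P)[t]/(f_P)$ splits by the Chinese Remainder Theorem as a product of local Artinian algebras $\C(P)[s]/(s^e)$, one for each distinct root of $f_P$, and the trace form of each such factor has rank $1$ (in the basis $1,s,\dots,s^{e-1}$ its Gram matrix is $\operatorname{diag}(e,0,\dots,0)$, since $s$ is nilpotent and $e\neq 0$ in characteristic $0$). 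Hence the rank of $\ta\otimes_A R$ equals the total number of distinct roots of $f_P$ in $\C(P)$.

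Putting these together, $\ta\otimes_A R$ is positive semidefinite if and only if its signature and rank agree, which by the two computations happens exactly when every root of $f_P$ already lies in $R$ — that is, exactly when $f$ is real rooted in $P$. I do not anticipate a genuine obstacle here; the only points demanding a little care are the compatibility of the trace form with base change in the first step and, in the rank computation, the fact that $f_P$ may fail to be separable, which is why one argues via the decomposition into local Artinian factors rather than by counting roots with multiplicity.
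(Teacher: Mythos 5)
Your proposal is correct and takes essentially the same route the paper intends: the paper simply asserts the corollary follows immediately from Sylvester's Lemma~\ref{lem:signature_trace_form}, and your argument supplies precisely the standard fillings-in (compatibility of the trace form with base change, and the computation that the rank of the trace form equals the number of distinct roots via the Artinian decomposition) needed to make that deduction rigorous.
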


\begin{rem}\label{rem:real_rooted}
	(a)\quad
	From the previous corollary it follows that the set $U\subseteq\Sper A$ of points where $f$ is real rooted consists exactly of those points where all the principal minors of the Hermite matrix of $f$ are nonnegative. In particular, it is a basic closed subset of $\Sper A$ with respect to the Harrison topology.

	\medskip
	(b)\quad For $A=\R[x_1,\dots,x_{\ell}]$ we view $\R^{\ell}$ as a subset of $\Sper A$. Then for $a\in\R^{\ell}$ a polynomial $f\in A[t]$ is real rooted in $a$ if $f_a=f(a,t)\in \R[t]$ has only real roots.

	The set of points $\R^{\ell}$ and the set of orderings $\Sper \R(x_1,\dots,x_{\ell})$ of the rational function field are both dense in $\Sper A$. This follows essentially from Tarski's Transfer Principle \citep[Theorem~2.4.3]{Marshall08} and from the Baer-Krull correspondence \citep[Section~1.5]{Marshall08}, respectively. In particular real rootedness of $f$ in $\R^{\ell}$, $\Sper \R[x_1,\dots,x_{\ell}]$ and $\Sper \R(x_1,\dots,x_{\ell})$ are all equivalent. 
\end{rem}

We make use of the following special local case of this transfer argument, which can be treated completely elementary.
\begin{lem}\label{lem:real_rooted_transfer}
	Let $f\in \R[x,t]$ be real rooted in a neighborhood of the origin of $\R\subseteq \Sper \R[x]$. Then $f$ is real rooted over the field of Laurent series $\R((x))$.
\end{lem}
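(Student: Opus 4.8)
The plan is to reduce the statement about $\R((x))$ to the local data at the origin, using the fact that $\R((x))$ is the fraction field of the complete discrete valuation ring $\R[[x]]$ with residue field $\R$. By Corollary~\ref{cor:signature_trace_form} applied to $A=\R((x))$ and $B=\R((x))[t]/(f)$, real rootedness of $f$ over $\R((x))$ is equivalent to positive semidefiniteness of the trace form $\ta=\ta_{B|A}$ over every real closure $R(P)$, $P\in\Sper\R((x))$. Since $\R((x))$ is a real field with exactly two orderings (determined by the sign of $x$), and each is built from $\R[[x]]$, it suffices to show that the Hermite matrix $H\in\Sym_n\R[[x]]$ of $f$ (which has entries in $\R[[x]]$ because $f\in\R[x,t]\subseteq\R[[x]][t]$) is positive semidefinite over $\R((x))$ with respect to both orderings. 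Here I would invoke Remark~\ref{rem:real_rooted}(a): the locus in $\Sper$ where $f$ is real rooted is cut out by the nonnegativity of the principal minors of $H$.

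The key step is a local positivity transfer. The hypothesis gives an interval $(-\ep,\ep)$ of $\R$ on which $f$ is real rooted, hence on which every principal minor $m_k(x)$ of $H$ — a polynomial in $\R[x]$ — satisfies $m_k(a)\ge 0$ for all $a\in(-\ep,\ep)$. A polynomial in one variable that is nonnegative on an interval around $0$ has the property that its lowest-degree nonvanishing coefficient is positive (look at the sign of $m_k(a)$ as $a\to 0^+$ and $a\to 0^-$: if $m_k=c_j x^j+\cdots$ with $c_j\neq 0$ the leading local behavior is $c_j a^j$, and this forces $c_j>0$, using $j$ even when we also test $a<0$; in all cases $c_j>0$). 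Equivalently, $m_k(x)=x^{j}u(x)$ with $u\in\R[[x]]$ a unit whose "leading" value $u(0)=c_j>0$, and $j$ is forced to be even. Then $m_k$ is a square times a positive unit in $\R[[x]]$, hence $m_k>0$ in $\R((x))$ for \emph{both} orderings (the even power $x^j$ is a square). Thus all principal minors of $H$ are nonnegative — in fact the nonzero ones are positive — in both orderings of $\R((x))$, so $H$, and therefore $\ta$, is positive semidefinite over $\R((x))$, whence $f$ is real rooted over $\R((x))$ by Corollary~\ref{cor:signature_trace_form}.

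**Main obstacle.**

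The part requiring care is the elementary one-variable fact that nonnegativity of $m_k$ on a two-sided neighborhood of $0$ forces its lowest nonvanishing coefficient to be positive and its exponent to be even, carried out purely over an arbitrary real closed field $\R$ rather than over $\mathbb R$ with analysis. Over a general real closed field one argues order-theoretically: if $m_k(x)=c_j x^j(1+x\,r(x))$ with $c_j\neq 0$, then for $0<|a|$ small in $\R$ the factor $1+a\,r(a)$ is positive (its value is $1$ plus an infinitesimally small — relative to the interval — quantity, made rigorous by choosing the neighborhood inside $(-1/(1+\|r\|),\,\cdot)$ type bounds, or simply by continuity of polynomials in the order topology of a real closed field), so $\operatorname{sign} m_k(a)=\operatorname{sign}(c_j a^j)$; testing $a$ of both signs and using $m_k(a)\ge 0$ gives $c_j>0$ and, if $j$ were odd, a contradiction from $a<0$. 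Everything else — expressing $m_k$ as a square times a positive unit in $\R[[x]]$, and reading off positive semidefiniteness of $H$ over $\R((x))$ in both orderings via Remark~\ref{rem:real_rooted}(a) and Corollary~\ref{cor:signature_trace_form} — is then bookkeeping. One should also note that the two orderings of $\R((x))$ exhaust $\Sper\R((x))$ up to the equivalence that matters for the signature of $\ta$, or simply observe that positive semidefiniteness of $H$ as a matrix over $\R((x))$ is detected by its principal minors lying in the positive cone of \emph{each} ordering, which is exactly what has been verified.
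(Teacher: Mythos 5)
Your proof is correct and reaches the same intermediate reduction as the paper's (via the Hermite matrix $H$ and Corollary~\ref{cor:signature_trace_form}, it suffices to show $H$ is positive semidefinite with respect to both orderings of $\R((x))$), but you close that gap differently. The paper diagonalizes $H$ as a quadratic form over $\R(x)$ and invokes the one-variable Positivstellensatz for the interval $[-\ep,\ep]$: a rational function nonnegative on $(-\ep,\ep)$ lies in the preordering generated by $\ep+x$ and $\ep-x$, and since both elements remain positive in each ordering of $\R((x))$, so do the diagonal entries. You instead work with the principal-minors criterion for positive semidefiniteness (via Remark~\ref{rem:real_rooted}(a)), and for each principal minor $m_k\in\R[x]$ argue directly from its nonnegativity on a two-sided neighborhood of $0$ that its order of vanishing at $0$ is even and its lowest nonzero coefficient is positive, so $m_k$ factors as a positive constant times an even power of $x$ times a power-series unit with positive constant term — a product manifestly nonnegative in both orderings of $\R((x))$. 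Both routes are elementary and exploit the same two-sidedness; yours avoids the diagonalization step and the Positivstellensatz at the modest cost of invoking the all-principal-minors characterization of semidefiniteness over a real closed field (which the paper has already asserted in Remark~\ref{rem:real_rooted}(a)), while the paper's is perhaps a touch more uniform in that it never analyzes orders of vanishing.

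One small presentational nit: $H$ already lies in $\Sym_n\R[x]$ (its entries are the power sums, polynomial in the coefficients of $f$), so you can and should treat the $m_k$ as honest polynomials from the start; the passage through $\R[[x]]$ is only needed at the very end when you express $m_k$ as an even power of $x$ times a unit of $\R[[x]]$.
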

\begin{proof}
	Let $H\in\Sym_n\R[x]$ be any representing matrix of the trace form of $\R[x,t]/(f)$ over $\R[x]$, e.g. the Hermite matrix of $f$. Using Corollary~\ref{cor:signature_trace_form} we get that $H(a)$ is positive semidefinite for all $a$ in some neighborhood of $0$ and we want to conclude that $H$ is positive semidefinite with respect to both orderings of $\R((x))$. To see this in an elementary way we diagonalize $H$ as a quadratic form over $\R(x)$. Then the resulting diagonal entries are nonnegative rational functions on $(-\ep,\ep)$ for some $\ep\in\R_{>0}$ and thus lie in the preordering generated by $\ep+x$ and $\ep-x$, which is the set of elements of the form $\si_0+\si_1(\ep+x)+\si_2(\ep-x)$, where the $\si_i$ are sums of squares of elements in $\R(x)$. So they are also nonnegative with respect to the two orderings of $\R((x))$ since both make $\ep\pm x$ positive.
\end{proof}

\begin{lem}\label{lem:real_rooted_split}
	Over $\R((x))$ every real rooted polynomial splits into linear factors.
\end{lem}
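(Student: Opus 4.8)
The plan is to pass to the Galois closure of an irreducible factor and to use the tame ramification theory of the complete discretely valued field $K:=\R((x))$, whose residue field is the real closed field $\R$.

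It suffices to prove that every monic \emph{irreducible} $f\in K[t]$ that is real rooted over $K$ is linear: a monic polynomial is a product of monic irreducibles, and each of these is again real rooted over $K$ since its roots lie among the roots of $f$. So fix such an irreducible $f$, choose a root $\al$, and set $L:=K(\al)$ and $n:=[L:K]=\deg f$. Applying the definition of real rootedness to the inclusion $K\hookrightarrow R(P)$ for an ordering $P\in\Sper K$ shows that $f$ splits completely over the real closure $R(P)$; hence the splitting field $E$ of $f$ over $K$ — a Galois extension containing $L$ — admits a $K$-embedding into $R(P)$. The field $K$ carries the two orderings $P_+$ and $P_-$ determined by $x>0$ and $x<0$, so $E$ is formally real and $K$-embeds into both $R(P_+)$ and $R(P_-)$.

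The crucial structural input is that, apart from $K$ itself, the only formally real Galois extensions of $K=\R((x))$ are $K(\sqrt x)$ and $K(\sqrt{-x})$. To see this, recall that the $x$-adic valuation extends uniquely to any finite extension $E$, with residue field either $\R$ or $\C$. If it were $\C$, then $E$ would contain the unramified quadratic subextension $K(\sqrt{-1})=\C((x))$, hence $\sqrt{-1}$, contradicting formal reality. So $E/K$ is totally ramified, and tamely so because $\chr\R=0$; by the structure of tame inertia (see \citep[Chapter~IV]{Serre79_book}) the group $\operatorname{Gal}(E/K)$ embeds into the group of $N$th roots of unity of $\R$, where $N:=[E:K]$. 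Since $\R$ is real closed, this group has order at most $2$, so $N\le2$; and the formally real quadratic extensions of $K$ are precisely $K(\sqrt x)$ and $K(\sqrt{-x})$, because $K^\times/(K^\times)^2$ is represented by $1,-1,x,-x$ and $K(\sqrt{-1})=\C((x))$ is not formally real.

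It remains to note that $K(\sqrt x)$ admits no $K$-embedding into $R(P_-)$ (there $x<0$, so $x$ is not a square), and likewise $K(\sqrt{-x})$ admits none into $R(P_+)$. Since $E$ $K$-embeds into both $R(P_+)$ and $R(P_-)$ and equals one of $K$, $K(\sqrt x)$, $K(\sqrt{-x})$, we conclude $E=K$; hence $n=[L:K]\le[E:K]=1$, so $f$ is linear, and a general real rooted polynomial over $\R((x))$ splits into linear factors. I expect the main obstacle to be the structural claim in the third paragraph — the classification of formally real Galois extensions of $\R((x))$ — which relies on the unique extension of the valuation together with tame ramification theory; the remaining steps are routine bookkeeping with orderings and square classes. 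One could equivalently reformulate the whole argument in terms of trace forms via Corollary~\ref{cor:signature_trace_form}, but that does not circumvent this structural point.
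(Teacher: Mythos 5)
Your proof is correct, and its underlying content is the same as the paper's: the local structure of finite extensions of $\R((x))$. The paper's own proof is a three-sentence appeal to the classification coming from Puiseux-series theory — every finite extension either contains $\C$ or is $\R((x^{1/e}))$, and for $e>1$ these cannot carry all the roots of a real rooted $f$ — whereas you re-derive this classification from scratch via the standard unramified/totally-ramified dichotomy and the embedding of the tame inertia group into $\mu(\overline{k})=\mu(\R)=\{\pm1\}$ (Serre, \emph{Local Fields}, Ch.~IV). What you gain by passing to the Galois closure $E$ and testing it against \emph{both} orderings $P_\pm$ of $\R((x))$ is a cleaner treatment of the ramification index $e=2$ case: for $f=t^2-x$, say, the extension $L=\R((x^{1/2}))$ is formally real and both $K$-embeddings of $L$ into $\overline{K}$ have formally real image, so the paper's phrase ``nonreal embeddings'' has to be read as ``no embedding into $R(P)$ for \emph{each} ordering $P$'' — which is precisely the point you make explicit by observing that $K(\sqrt{x})$ cannot $K$-embed into $R(P_-)$ and $K(\sqrt{-x})$ cannot $K$-embed into $R(P_+)$. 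So: same structural input, but a more self-contained and slightly more careful execution that doesn't lean on the Puiseux description as a black box.
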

\begin{proof}
	Any finite field extension of $\R((x))$ either contains $\C$ or is of the form $\R((x^{\frac1e}))$. Both have nonreal embeddings into the algebraic closure of $\R((x))$ unless $e=1$. That means if $f\in\R((x))[t]$ is real rooted and irreducible over $\R((x))$ then it must be of degree one. 
\end{proof}

As a consequence we get the absence of real ramification that we need for the factorization of the codifferent in the proof of our main result. Similar results can be found in \citep[Corollary to Lemma 4.1]{Dubrovin85eng}, \citep[Theorem~6.2]{Bass_Estes_Guralnick94} and a higher dimensional generalization in \citep[Theorem~2.19]{Kummer_Shamovich15}.
\begin{cor}\label{cor:really_unramified}
	Let $f\in\R[x,t]$ be irreducible and denote $K:=\R(x)$ and $L:=K[t]/(f)$. If $f$ is real rooted in a neighborhood of $a\in\R$ then the $(x-a)$-adic valuation of $K$ is unramified in $L|K$.
\end{cor}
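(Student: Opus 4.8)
The plan is to reduce the ramification statement to the splitting behaviour over the Laurent series field, using the lemmas just proved. First I would pass to the completion: the $(x-a)$-adic valuation of $K=\R(x)$ has completion $\R((x-a))$, and after the harmless change of variable $x \mapsto x-a$ we may assume $a=0$, so the completed base field is $\R((x))$. The ramification of $v_{x-a}$ in $L|K$ is governed entirely by the splitting of $f$ over $\R((x))$: if $f=\prod_i g_i$ is the factorization into irreducibles in $\R((x))[t]$, then the valuations of $L$ lying above $v_{x-a}$ correspond to the $g_i$, and the ramification index of the one attached to $g_i$ is the ramification index of the extension $\R((x))[t]/(g_i)$ over $\R((x))$. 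So $v_{x-a}$ is unramified in $L|K$ precisely when every $g_i$ gives an unramified (in particular, here, this will force tamely ramified of index one) extension of $\R((x))$.

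Now I invoke the earlier results. By Lemma~\ref{lem:real_rooted_transfer}, real rootedness of $f$ in a neighborhood of the origin implies $f$ is real rooted over $\R((x))$. By Lemma~\ref{lem:real_rooted_split}, every real rooted polynomial over $\R((x))$ splits into linear factors; hence each $g_i$ has degree one, so $\R((x))[t]/(g_i) = \R((x))$ and the corresponding extension is trivially unramified. Therefore every valuation of $L$ above $v_{x-a}$ is unramified with residue field $\R((x))$ (equivalently, with trivial residue extension over $k(v_{x-a})=\R$), which is exactly the assertion that $v_{x-a}$ is unramified in $L|K$.

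One point that needs care: $f$ need not be irreducible over $\R((x))$ even though it is irreducible over $\R(x)$, so $L\otimes_K \R((x))$ is a product of fields rather than a single field; but this is precisely the decomposition indexed by the $g_i$, and unramifiedness of $v_{x-a}$ in $L|K$ is by definition the statement that \emph{each} local factor is unramified, which is what we have shown. A second point: one should note $f$ is monic in $t$ (being real rooted), so $L=K[t]/(f)$ is the integral domain that is the function field of the curve and the localization of $\R[x,t]/(f)$ at the generic point, and the factorization of $f$ over the completion indeed computes the decomposition of the valuation; this is standard for function fields of curves. I expect the main (though still modest) obstacle to be bookkeeping the correspondence between factors $g_i$ of $f$ over $\R((x))$ and the places of $L$ above $v_{x-a}$ cleanly, i.e. making sure the reducibility of $f$ over the completion does not cause confusion with the irreducibility hypothesis over $K$.
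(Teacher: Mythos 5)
Your proof is correct and matches the paper's argument essentially line for line: reduce to $a=0$, apply Lemma~\ref{lem:real_rooted_transfer} to pass to $\R((x))$, apply Lemma~\ref{lem:real_rooted_split} to get the linear factorization, and read off that each local factor $L_w$ equals $\R((x))$, hence is unramified. The "points that need care" you flag are indeed the standard bookkeeping (completions of $L$ above $v$ correspond to irreducible factors of $f$ over the completion of $K$), and the paper handles them in exactly the same implicit way.
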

\begin{proof}
	Let $f$ be real rooted in a neighborhood of $a$ which we can assume to be the origin. By Lemma~\ref{lem:real_rooted_transfer} it is also real rooted over $\R((x))$. Let $v$ be the $x$-adic valuation of $\R(x)$ and $w$ an extension of $v$ to $L$. Then the completion $L_w$ is a factor in $L\otimes\R((x))$ which must be of degree one by Lemma~\ref{lem:real_rooted_split}, i.e., $L_w=\R((x))$. In particular, $w|v$ is unramified.
\end{proof}

\section{Squares in the Narrow Class Group}
\label{sec:narrow_class_group}
Recall that the narrow class group of a Dedekind  domain $A$ is the ideal group $\cI_A$ modulo the subgroup of sum of squares principal ideals. In Corollary~\ref{cor:divisible} we characterize the squares in the narrow class group of a smooth affine curve over $\R$, which is the essential step in finding positive definite unimodular scaled trace forms in the proof of Theorem \ref{thm:main}. This characterization is a consequence of the $2$-divisibility of the class group of a smooth affine curve over $\C$.

\begin{thm}\label{thm:divisible}
	If $A$ is a Dedekind domain that is a finitely generated $\C$-algebra, then its class group is divisible.
\end{thm}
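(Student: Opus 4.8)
The plan is to reduce the divisibility of $\Cl A$ to a statement about the Picard group of a smooth projective curve over $\C$ and then exploit the fact that the Jacobian is a divisible group. First I would recall the standard geometric setup: since $A$ is a finitely generated $\C$-algebra of Krull dimension one and a Dedekind domain, it is regular, so $X := \Spec A$ is a smooth affine curve over $\C$. Let $\overline{X}$ be its smooth projective completion, obtained by adding finitely many closed points $p_1,\dots,p_r$, and write $\Cl(\overline{X}) = \Pic(\overline{X})$ for the divisor class group of the complete curve. There is the classical right-exact excision sequence
\[
	\bigoplus_{i=1}^r \Z\,[p_i] \longrightarrow \Cl(\overline{X}) \longrightarrow \Cl(A) \longrightarrow 0,
\]
which identifies $\Cl A$ as a quotient of $\Cl(\overline{X})$. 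Since a quotient of a divisible group is divisible, it suffices to prove that $\Cl(\overline{X})$ is divisible.

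Next I would invoke the structure of $\Pic(\overline{X})$ for a smooth projective curve over an algebraically closed field. The degree map gives a short exact sequence
\[
	0 \longrightarrow \Pic^0(\overline{X}) \longrightarrow \Pic(\overline{X}) \xrightarrow{\ \deg\ } \Z \longrightarrow 0,
\]
and $\Z$ is not divisible, so this is the one place where care is needed. The resolution is that the sequence splits as abelian groups: because $\overline{X}$ has a $\C$-rational point $x_0$ (indeed $\C$ is algebraically closed, so closed points are rational), the class $[x_0]$ has degree one and generates a complement to $\Pic^0(\overline{X})$. Hence $\Pic(\overline{X}) \cong \Pic^0(\overline{X}) \oplus \Z$. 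This still is not divisible, so the reduction to the projective curve is not by itself enough — one genuinely needs the affine quotient. Combining the two displayed sequences: $\Cl A$ is the cokernel of $\bigoplus_i \Z[p_i] \to \Pic^0(\overline{X}) \oplus \Z$, and since $A$ is not the coordinate ring of a complete curve we have $r \geq 1$, so at least one generator $[p_1]$ maps to an element of degree one; therefore the image already surjects onto the $\Z$-summand, and $\Cl A$ is a quotient of $\Pic^0(\overline{X})$. So the claim reduces to: $\Pic^0(\overline{X})$ is a divisible group.

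Finally, $\Pic^0(\overline{X})$ is the group of $\C$-points of the Jacobian variety $J = \mathrm{Jac}(\overline{X})$, an abelian variety over $\C$ of dimension equal to the genus $g$. For any $n \geq 1$, multiplication by $n$ on an abelian variety is an isogeny — in particular a surjective morphism — and over an algebraically closed field a surjective morphism of varieties is surjective on points, so $[n]\colon J(\C) \to J(\C)$ is surjective. Thus $\Pic^0(\overline{X}) = J(\C)$ is divisible, which completes the argument. The main obstacle, and the only subtle point, is the degree obstruction: one must not try to prove $\Pic(\overline{X})$ is divisible (it is not), but rather use that passing to the \emph{affine} curve kills the degree quotient, so that the divisible piece $\Pic^0$ is all that survives. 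Everything else — regularity of $A$, existence of the smooth completion, the excision sequence, and divisibility of abelian varieties — is standard. One should also remark that if $A$ is already a field (the degenerate case, $X$ a point), then $\Cl A = 0$ is trivially divisible, so we may assume $\dim A = 1$ throughout.
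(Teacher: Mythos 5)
Your proposal is correct and follows the same geometric route the paper alludes to: reduce to the degree-zero part of the divisor class group of the smooth completion (the paper phrases this as $\Cl^0 K$ for the function field $K=\Quot A$) and then use that this is the group of $\C$-points of the Jacobian, an abelian variety, hence divisible. The paper itself keeps the proof to a one-line reduction plus citations (Frey for a direct argument valid in positive characteristic, Griffiths--Harris and Mumford for the Jacobian argument), whereas you have spelled out the excision sequence, the degree splitting, and why the $\Z$-quotient is killed on the affine curve — all correct, and consistent with the paper's cited approach.
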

\begin{proof}
	The class group $\Cl A$ is a quotient of the degree zero part $\Cl^0 K$ of the divisor class group of the univariate function field $K=\Quot A$ over $\C$.

	A direct proof of the divisibility of $\Cl^0 K$ is due to Frey \citep{Frey79} and holds even in positive characteristic. More geometric arguments rely on the Jacobian of the smooth curve corresponding to the function field $K$. See, e.g. \citep[Section 2.2]{Griffith_Harris78} for a classical analytic treatment or \citep[p. 42]{Mumford70} for an approach using Weil's algebraic generalization.
\end{proof}

\begin{cor}\label{cor:divisible}
	Let $A$ be a Dedekind domain that is a finitely generated $\R$-algebra and $J\in\cI_A$ a fractional $A$-ideal. Then the class of $J$ is a square in the narrow class group $\Cl_+ A$ if and only if all real prime ideals appear in $J$ with even order. In other words there exists $I\in\cI_A$ and a sum of squares $c\in\Quot A$ such that $J=cI^2$ if and only if $2|v_{\p}(J)$ for every nonzero real $\p\in\Spec A$.
\end{cor}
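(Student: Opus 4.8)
The plan is to prove both implications, the easy one first. For the forward direction, suppose $J = cI^2$ with $c$ a sum of squares in $K := \Quot A$ and $I \in \cI_A$. Then for any nonzero prime $\p$ we have $v_\p(J) = v_\p(c) + 2v_\p(I)$, so it suffices to show $v_\p(c)$ is even whenever $\p$ is real. Since $\p$ is real, $k(\p)$ admits an ordering, and $v_\p$ extends to a discrete valuation whose residue field $k(\p)$ is formally real; writing $c = u\pi^{v_\p(c)}$ with $u$ a unit in the localization $A_\p$ and $\pi$ a uniformizer, the fact that $c$ is a nonzero sum of squares forces, by looking at leading terms in $\pi$, that $v_\p(c)$ is even (an odd-order zero or pole would make the leading coefficient of a sum of squares equal to a sum of squares times $\pi$, i.e.\ change sign across $\p$, contradicting that a sum of squares is nonnegative for the ordering on $A_\p$ compatible with $P$). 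This is the routine half.

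For the converse — the substantial direction — suppose $2 \mid v_\p(J)$ for every nonzero real $\p \in \Spec A$. First I would pass to the complexification: set $A_\C := A \otimes_\R \C$, which is again a Dedekind domain and a finitely generated $\C$-algebra, and let $\sigma$ denote the nontrivial automorphism of $A_\C$ over $A$ (complex conjugation), extended to $\cI_{A_\C}$. The nonreal primes of $A$ split or extend in $A_\C$ into pairs $\{\q, \sigma\q\}$ (or remain a single prime, but then $\q = \sigma \q$ and one handles it directly), while the real primes of $A$ may stay prime in $A_\C$. The hypothesis $2 \mid v_\p(J)$ at real $\p$, combined with freely choosing even exponents at pairs of conjugate primes, should let me write the extended ideal $J A_\C$ in the form $J A_\C = J_0^2 \cdot N$ where $N$ is, up to a square, a product involving only the extra factors that the norm $N_{A_\C|A}$ will absorb — more cleanly: I want to produce $J_1 \in \cI_{A_\C}$ with $J_1 \cdot \sigma J_1 = J A_\C$, which is possible precisely because at each real prime $\p$ (where $\p A_\C = \q$ is prime and $\sigma$-fixed) the even multiplicity lets us split $v_\q(JA_\C)/2$ evenly between $J_1$ and $\sigma J_1$, and at conjugate pairs we assign all of the multiplicity to one of the two primes. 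Now invoke Theorem~\ref{thm:divisible}: $\Cl A_\C$ is divisible, in particular $2$-divisible, so the class of $J_1$ is a square, say $J_1 = e_1 E^2$ for some $e_1 \in K_\C^\times$ ($K_\C = \Quot A_\C$) and $E \in \cI_{A_\C}$. Then
\[
	J A_\C = J_1 \cdot \sigma J_1 = (e_1 \sigma e_1)(E \cdot \sigma E)^2.
\]
Set $c := e_1 \sigma e_1 = N_{K_\C|K}(e_1) \in K^\times$ and $I := $ the ideal of $A$ with $I A_\C = E \cdot \sigma E$ (this is the relative norm ideal $N_{A_\C|A}(E)$, which lives in $\cI_A$ since $E \cdot \sigma E$ is $\sigma$-stable and $A = A_\C^{\sigma}$). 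Then $J A_\C = c (I A_\C)^2 = (cI^2)A_\C$, and since extension of ideals from $A$ to $A_\C$ is injective on $\cI_A$, we conclude $J = cI^2$. Finally $c = e_1 \sigma e_1 = |e_1|^2$ is a norm from $\C$ to $\R$ at the level of the function field, hence $c = \alpha^2 + \beta^2$ with $e_1 = \alpha + i\beta$, $\alpha,\beta \in K$ — so $c$ is a sum of (two) squares, as required.

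The main obstacle I anticipate is the bookkeeping at the real primes in the complexification step: one must check carefully that a nonzero real prime $\p$ of $A$ gives a prime $\p A_\C$ of $A_\C$ that is fixed by $\sigma$ (rather than splitting), so that "half the multiplicity" is an honest integer exactly thanks to the hypothesis, and simultaneously that nonreal primes split into genuine conjugate pairs so their multiplicities can be shunted entirely onto one factor — this is where Corollary~\ref{cor:really_unramified}-type reasoning (real points are unramified/inert, the exotic behavior is confined to nonreal points) and the structure of $\R$-algebra Dedekind domains enter. A secondary point requiring care is that $\sigma J_1 = J_1$ is \emph{not} assumed — only $J_1 \sigma J_1 = JA_\C$ — so when I write $J_1 = e_1 E^2$ I cannot assume $E$ is $\sigma$-stable; it is the product $E \cdot \sigma E$ that descends, and I should verify the descent of ideals $\cI_A \hookrightarrow \cI_{A_\C}^\sigma$ is surjective onto the $\sigma$-fixed part (true because $A_\C$ is free of rank $2$ over $A$ with $A_\C^\sigma = A$, using the norm map), together with injectivity (clear from looking at multiplicities). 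Everything else is standard Dedekind-domain manipulation.
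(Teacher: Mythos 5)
Your argument is essentially correct and runs closely parallel to the paper's: both pass to the complexification $A\otimes_\R\C$, invoke Theorem~\ref{thm:divisible} for $2$-divisibility, and come back down to $A$ via the norm, with the key point that norms from $\C$ to $\R$ (at the level of function fields) are sums of two squares. The routes diverge in one tactical choice. The paper first clears all real primes from $J$ (they occur to even order, so removing them costs only a square of a principal ideal generated by an element of $A$, which does not change the class in $\Cl_+A$), reducing to the claim that each nonreal prime $\p$ is a square in $\Cl_+A$; it then observes that $\p$ is itself a norm, since nonreal primes have residue field $\C$ and hence residue degree one, and finishes with the induced map $\Cl(A\otimes\C)\to\Cl_+A$. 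You instead construct directly a fractional ideal $J_1$ of $A\otimes\C$ with $J_1\cdot\sigma J_1=J(A\otimes\C)$, halving multiplicities at the $\sigma$-fixed (real, inert) primes using the evenness hypothesis and dumping the full multiplicity onto one member of each conjugate pair at the split (nonreal) primes, then factor $J_1=e_1E^2$ and descend. This is a legitimate and slightly more hands-on variant; your bookkeeping at the real and nonreal primes is accurate (real primes are inert with residue field $\C$ of degree $2$, nonreal primes split into conjugate pairs, both unramified since $\C|\R$ is \'etale — so your parenthetical worry about a nonreal prime remaining inert cannot actually occur). The descent $N_{A_\C|A}(E)A_\C=E\cdot\sigma E$ and injectivity of extension of ideals are also fine.

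There is one genuine gap: you silently assume throughout that $A_\C=A\otimes_\R\C$ is a domain. This fails precisely when $-1$ is already a square in $A$ (equivalently $\C\subseteq A$), in which case $A\otimes_\R\C\cong A\times A$ and the whole complexification machinery — conjugate primes, $\sigma$, the norm, $J_1$ — collapses. The paper explicitly splits off this case: if $\C\subseteq A$ there are no real primes, every element of $\Quot A$ is a sum of two squares (using $i$), hence $\Cl_+A=\Cl A$, and the claim is immediate from Theorem~\ref{thm:divisible} applied to $A$ itself. You should add this (short) case distinction before passing to $A_\C$; as written, the proof is not valid for, say, $A=\C[x]$. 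With that repair the argument is complete.
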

\begin{proof}
	The last condition is clearly necessary, since in general the value of a sum of squares under any real valuation is divisible by $2$, see \citep[Exercise 1.4.10]{Prestel_Delzell01}. For the converse let $v_{\p}(J)$ be even for every real prime $\p$. Multiplying $J$ by an appropriate product of even powers of real prime ideals we can even assume that $J$ is a product of nonreal prime ideals and their inverses. It thus suffices to show that the class of every nonreal prime ideal is a square in $\Cl_+ A$.

	If $\C\subseteq A$ then $\Cl_+ A=\Cl A$ and the claim follows directly from Theorem~\ref{thm:divisible}. Assume now that $-1$ is not a square in $A$ and hence the finitely generated $\C$-algebra $B:=A\otimes_{\R}\C$ is again a Dedekind domain the class group of which is divisible, again by Theorem~\ref{thm:divisible}.

	Now let $\p\in\cI_A$ be a nonreal prime ideal. We want to show that its class in $\Cl_+ A$ is a square. The norm of an element of $B$ is a sum of two squares in $A$. Therefore, the ideal norm map $\Norm_{B|A}$ 
	induces a homomorphism $\Cl B\to \Cl_+ A$. Using the $2$-divisibility of $\Cl B$ it thus suffices to show that $\p$ is the norm of an ideal in $B$. Since $\p$ is nonreal we have $k(\p)=\C$. Hence for $\q\in\cI_B$ lying above $\p$ the extension $k(\q)$ of $k(\p)$ is trivial, so the residue degree $f_{\q|\p}=[k(\q):k(\p)]$ is $1$. So we get $\Norm_{B|A}(\q)=\p^{f_{\q|\p}}=\p$, as desired.

	More concretely this means that $\p$ corresponds to a pair of conjugate points on the affine curve $\Spec B$. Hence $\p$ factors over $\C$ into a product of two conjugate prime ideals. The norm of each of these two factors equals $\p$.
\end{proof}

\section{Proof of the Main Theorem}
\label{sec:proof}
Now we have collected all the necessary ingredients to prove Theorem \ref{thm:main}. Let $f\in\R[x,t]$ be real rooted, i.e., $f$ is monic in $t$ and $f(a,t)$ has only real roots for all $a\in\R$. To prove that $f$ is the characteristic polynomial of a symmetric matrix over $\R[x]$ we may assume that $f$ is irreducible. Otherwise we find a symmetric spectral representation of each of its irreducible factors and compose them to a block diagonal matrix which then gives a symmetric spectral representation of $f$.

We fix the following notation:
\begin{itemize}
	\item $n=\deg_t f$,
	\item $A=\R[x]$ the coordinate ring of the real affine line $\A^1_{\R}$,
	\item $K=\R(x)$ its function field,
	\item $L=K[t]/(f)$ the function field of the plane affine curve $\cC$ defined by $f$,
	\item $B$ the integral closure of $A$ in $L$, i.e., the coordinate ring of the normalization $\widetilde{\cC}$ of $\cC$,
		\begin{center}$
			\begin{tikzcd}
				\widetilde{\cC}\arrow{d}	& B\arrow{r}{\subseteq}\arrow[-]{d}	& L\arrow[-]{d}{n}\\
				\A^1_{\R}	& A\arrow{r}{\subseteq}			& K
			\end{tikzcd}$
		\end{center}
	\item $\ta=\ta_{L|K}$ the trace form of $L|K$.
	\item $\De=\De(B|A)$ the codifferent of $B|A$.
\end{itemize}

We combine our preparatory work as outlined in the introduction. Since $f$ is real rooted in every point $a\in\R$, the extension $B|A$ is unramified in all real primes of $A$ by Corollary~\ref{cor:really_unramified}. Therefore, $v_{\q}(\De)=0$ for all real primes $\q\in\cI_B$ by Lemma~\ref{lem:different_unramified}. Using Corollary~\ref{cor:divisible} it now follows that the class of $\De$ in the narrow class group $\Cl_+ B$ is a square, i.e., there exists a sum of squares $c\in L^{\times}$ and a fractional ideal $I\in \cI_B$ such that $cI^2=\De$. By Lemma~\ref{lem:unimodular_trace_form}(c) the scaled trace form
\begin{align*}
	\be\colon I\times I&\to A\\
	(a,b)&\mapsto\Tr_{L|K}(abc)
\end{align*}
is well-defined and unimodular. Since $A$ is a principal ideal domain and $I$ is finitely generated and torsion free as an $A$-module, it is already free. Now by Theorem~\ref{thm:diagonalization} of Harder and Djoković we can orthogonalize it with nonzero real numbers on the diagonal. These must be positive as follows easily from Sylvester's Lemma~\ref{lem:signature_trace_form} since $c$ is a sum of squares. This means $(I,\be)$ admits an orthonormal basis which we denote by $\cB$.

Denote $\mu$ multiplication by $\overline{t}$, viewed as an endomorphism of the $K$-vector space $L$. Its characteristic polynomial is $f$. Since any $A$-basis of $I$ is also a $K$-basis of $L$, the restriction of $\mu$ to $I$ has characteristic polynomial $f$ as well.

Since $\mu$ is obviously self-adjoint with respect to $\be$, its representing matrix $M\in\Mat_n A$ with respect to the orthonormal basis $\cB$ of $I$ is symmetric, hence $M$ is a symmetric spectral representation of $f$ over $A$, as desired.\qed

\section{Symmetric Matrices and Real Valuations}
\label{sec:real_valuations}
The size of the entries of a symmetric matrix over $\R$ can be bounded in terms of its eigenvalues and hence in terms of the coefficients of its characteristic polynomial. We give a valuation theoretic analogue of this observation. Applied to the degree valuation this shows that the Helton-Vinnikov Theorem~\ref{thm:heltonvinnikov} follows from Theorem~\ref{thm:main}.

\bigskip
In the following let $v$ be a real valuation on $K$, i.e., the residue field $\overline{K}$ is formally real. Let $M\in \Mat_n K$. Denote $v(M)$ the minimal value of the entries of $M$. We obtain an obvious lower bound on the values of the coefficients of its characteristic polynomial $f=\det(tI_n-M)=\sum_ia_it^i\in K[t]$ since each $a_i$ is homogeneous of degree $n-i$ in the entries of $M$. In particular we have
\[
	v(a_i)\geq (n-i)v(M).
\]
If the matrix is symmetric then this bound is sharp, i.e., we have equality for at least one $i$:
\begin{prop}\label{prop:value}
	Let $M\in\Sym_n K$ be nonzero and $f=\det(tI_n-M)=\sum_ia_it^i\in K[t]$ $(a_i\in K)$. Then
	\[
		v(M)=\min_{0\leq i<n} \frac{v(a_i)}{n-i}.
	\]
	In particular the right hand side lies in the value group of $v$.
\end{prop}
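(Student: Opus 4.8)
The plan is to reduce the statement to a fact about symmetric matrices over the residue field. We already have the inequality $v(a_i) \geq (n-i)v(M)$ for all $i$, so it suffices to prove that equality holds for at least one $i < n$ (note that $a_n = 1$ has value $0$, so the minimum over $0\le i<n$ of $v(a_i)/(n-i)$ is the relevant quantity). Equivalently, we must rule out the possibility that $v(a_i) > (n-i)v(M)$ for every $i=0,\dots,n-1$ simultaneously.

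First I would normalize. Let $m := v(M)$, which is finite and lies in the value group since $M$ is a nonzero matrix over $K$. Pick $\pi \in K$ (or pass to a suitable element of the value group) realizing a value, and replace $M$ by $M' := \pi^{-1}M$ for an appropriate scalar so that $v(M') = 0$, i.e. all entries of $M'$ lie in the valuation ring $\cO_v$ and at least one entry is a unit. The characteristic polynomial transforms predictably: if $f(t) = \det(tI_n - M)$ then $\det(tI_n - M') = \pi^{-n} f(\pi t)$, whose $i$-th coefficient is $\pi^{i-n}a_i$, with value $v(a_i) - (n-i)m$. So after this normalization it is enough to treat the case $v(M) = 0$ and show that $\min_{0\le i<n} v(a_i) = 0$, i.e. that \emph{some} coefficient $a_i$ with $i<n$ is a unit in $\cO_v$.

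Now reduce modulo the maximal ideal $\m_v$: let $\overline{M} \in \Sym_n \overline{K}$ be the reduction of $M'$, which is nonzero because $M'$ has a unit entry, and symmetric because reduction respects transposition. Its characteristic polynomial $\overline{f}(t) = \det(tI_n - \overline{M}) = \sum_i \overline{a_i}\, t^i \in \overline{K}[t]$ is the reduction of the characteristic polynomial of $M'$. The claim $\min_{0\le i<n} v(a_i)=0$ is exactly the assertion that $\overline{a_i}\neq 0$ for some $i<n$, i.e. that $\overline{f}(t) \neq t^n$. Since $\overline{M}$ is a nonzero \emph{symmetric} matrix over the formally real field $\overline{K}$, this is where real-closedness enters: over a formally real field, a symmetric matrix whose characteristic polynomial is $t^n$ must be nilpotent, but a nilpotent symmetric matrix over a formally real field is zero (for instance $\overline{M}^2 = 0$ together with $\Tr(\overline{M}^2) = \sum_{i,j}\overline{M}_{ij}^2$ being a sum of squares forces all entries to vanish, since $-1$ is not a sum of squares). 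This contradicts $\overline{M}\neq 0$, so $\overline{f}\neq t^n$ and we are done.

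The main obstacle — and the only genuinely non-formal step — is precisely this last point: that a nonzero symmetric matrix over the residue field cannot have $t^n$ as characteristic polynomial. Everything before it is bookkeeping with valuations and the homogeneity of the coefficients of the characteristic polynomial in the matrix entries; the key input is that $\overline{K}$ is formally real, i.e. that the reduction of a \emph{real} valuation is used, which is exactly the hypothesis. I would state the residue-field fact as the crux (via the trace-of-square argument) and present the rest as the straightforward normalization-and-reduction argument sketched above. The final sentence of the proposition, that the right-hand side lies in the value group, then follows immediately since it equals $v(M) = v(\text{some entry of } M)$.
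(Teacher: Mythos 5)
Your proof is correct and follows essentially the same route as the paper: normalize so the matrix has entries in $\cO_v$ with at least one unit, reduce modulo $\m_v$ to get a nonzero symmetric matrix over the formally real residue field, and conclude from the fact that such a matrix cannot be nilpotent (equivalently, its characteristic polynomial cannot be $t^n$) that some coefficient $a_i$ with $i<n$ attains the bound. The paper simply normalizes by an entry $a$ of minimal value rather than an auxiliary uniformizer and cites Cayley--Hamilton in place of your explicit $\Tr(\overline M^2)=\sum_{i,j}\overline M_{ij}^2$ trace-of-square argument, but these are cosmetic differences.
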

\begin{proof}
	Let $a\in K^{\times}$ be an entry of $M$ with minimal value, i.e., $v(a)=v(M)$. We rescale $M$ and $f$ so that both lie in the valuation ring of $v$. So we define
	\[
		M_0:=a^{-1}M
	\]
	and 
	\[
		f_0:=\det(tI_n-M_0)=a^{-n}\det(atI_n-M)=a^{-n}f(at)=\sum_i\frac{a_i}{a^{n-i}}t^i.
	\]
	Since the residue field $\overline{K}$ is formally real and $\overline{M_0}\in \Mat_n\overline{K}$ is symmetric and nonzero it cannot be nilpotent. By the Cayley-Hamilton Theorem at least one coefficient of its characteristic polynomial other than the leading one must be nonzero. So there exists $i<n$ such that $\overline{\frac{a_i}{a^{n-i}}}$ is nonzero and hence 
	\[
		v(a_i)=v(a^{n-i})=(n-i)v(M)
	\]
	as claimed.
\end{proof}

Applying this to the case where $v$ is the degree valuation on $K=\R(x)$, i.e., $v=-\deg$, we immediately obtain the following
\begin{cor}\label{cor:degree}
	Let $M\in\Sym_n\R[x]$ and $f=\det(tI_n-M)\in\R[x,t]$ its characteristic polynomial. If the total degree of $f$ is $n$, then $M$ is linear, i.e., its entries have at most degree one.
\end{cor}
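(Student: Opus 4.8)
The plan is to deduce this directly from Proposition~\ref{prop:value}, applied to the \emph{degree valuation} $v=-\deg$ on $K=\R(x)$. First I would check that $v$ is a legitimate real valuation in the sense needed there: its valuation ring is $\{g/h\in\R(x)\mid \deg g\le\deg h\}$, with maximal ideal $\{g/h\mid \deg g<\deg h\}$ and residue field $\R$, which is formally real. For a matrix $M\in\Sym_n\R[x]$ one has $v(M)=\min_{i,j}v(m_{ij})=-\max_{i,j}\deg m_{ij}$, so the conclusion ``$M$ is linear'' is precisely the inequality $v(M)\ge -1$.

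Next I would dispose of the degenerate case $M=0$ separately: there $f=t^n$ and the claim is vacuous, so afterwards I may assume $M\neq 0$ and Proposition~\ref{prop:value} applies. Writing $f=\sum_{i=0}^{n}a_it^i$ with $a_i\in\R[x]$ and $a_n=1$ (since $f$ is monic of degree $n$ in $t$), the total degree of $f$ equals $\max_i(\deg a_i+i)$. The hypothesis that this total degree is $n$ then forces $\deg a_i\le n-i$ for every $i$, that is, $v(a_i)=-\deg a_i\ge -(n-i)$ for all $i<n$.

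Finally I would invoke Proposition~\ref{prop:value} to get $v(M)=\min_{0\le i<n}\frac{v(a_i)}{n-i}\ge\min_{0\le i<n}\frac{-(n-i)}{n-i}=-1$, which is exactly what was wanted. I do not expect a genuine obstacle here: the arithmetic content is already packaged in Proposition~\ref{prop:value}, and the only thing to get right is the bookkeeping translating ``$f$ has total degree $n$'' into the coefficientwise bounds $v(a_i)\ge-(n-i)$, together with the trivial-case check that lets us assume $M\neq0$.
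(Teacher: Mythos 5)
Your proof is correct and follows exactly the route the paper intends: apply Proposition~\ref{prop:value} with the degree valuation $v=-\deg$ on $\R(x)$, translate ``total degree $n$'' into $v(a_i)\ge -(n-i)$, and conclude $v(M)\ge -1$. The only cosmetic quibble is that when $M=0$ the statement is trivially true rather than vacuous, but your separate treatment of that case (needed because Proposition~\ref{prop:value} assumes $M\neq 0$) is exactly the right bookkeeping.
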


Using this it becomes easy to derive the Helton-Vinnikov Theorem from our main result.

\medskip
\begin{proof}[of Theorem \ref{thm:heltonvinnikov}]
	After rescaling $F$ and $e$ and applying a linear change of variables we can assume that $F(e)=1$ and $e=(0,0,1)$. Then the dehomogenization $f:=F(x,1,t)\in\R[x,t]$ is real rooted and thus admits a symmetric spectral representation $M\in\Sym_n\R[x]$ by Theorem~\ref{thm:main}. The condition that $f$ is of total degree $n$ forces the entries of $M$ to be linear, by Corollary~\ref{cor:degree}. This means $M$ is of the form $M_1x+M_0$ for some $M_0,M_1\in\Sym_n\R$. Homogenizing again we see that $F$ is the determinant of the real symmetric pencil $L:=I_nz-M_0y-M_1x$. Moreover, $L(e)=I_n$ is positive definite. 
\end{proof}

\section{Hermitian Spectral Representations}
\label{sec:hermitian}
Finally, we want to sketch how the above procedure can be simplified to produce complex Hermitian instead of real symmetric spectral representations of real rooted polynomials. As usual a matrix $M\in\Mat_n\C[x]$ is \emph{Hermitian} if it equals its conjugate transpose, where conjugation refers to coefficient wise complex conjugation of the entries. 
The process of producing Hermitian representations becomes considerably more elementary, since it does not depend on Theorem \ref{thm:divisible}, the divisibility of the class group.

By the same argument provided in the previous section as well as the appropriate reformulation of Proposition \ref{prop:value} this weaker result can be used to prove existence of definite linear Hermitian determinantal representations of hyperbolic polynomials. This result has been obtained previously by Dubrovin \citep{Dubrovin85eng} and Vinnikov \citep{Vinnikov93}. Further elementary proofs can be found in \citep{Plaumann_Vinzant13} and \citep{Drexel3_Vinnikov16}.

To produce Hermitian representations we replace the symmetric bilinear trace form of $L$ over $K$ in the proof of Theorem \ref{thm:main} by the Hermitian trace form of $\widetilde{L}:=L\otimes_{\R} \C$ over $\widetilde{K}:=\C(x)$ which is given by
\begin{align*}
	\widetilde{\ta}\colon\widetilde{L}\times \widetilde{L}&\to \widetilde{K}\\
	(a,b)&\mapsto \Tr_{\widetilde{L}|\widetilde{K}}(a^*b)
\end{align*}
where $*$ denotes the induced complex conjugation on $\widetilde{L}$. The crucial difference now is the required factorization of the codifferent $\widetilde{\De}$ of $\widetilde{B}:=B\otimes\C$ over $\widetilde{A}:=\C[x]$. Namely $\widetilde{\De}$ is already a Hermitian square, i.e., there exists a fractional $\widetilde{B}$-ideal $I$ such that $I^*I=\widetilde{\De}$. Then $\widetilde{\ta}$ restricts to a unimodular positive definite Hermitian form on $I$. Using the more general version of Theorem \ref{thm:diagonalization} found in \citep{Djokovic76} it therefore admits an orthonormal basis. Now we can proceed as before to get a Hermitian spectral representation of $f$. A more detailed explanation can be found in \citep{Hanselka15}.

\bibliographystyle{alpha}
\bibliography{lit}
\end{document}